\newtheorem{theorem}{Theorem}[section]
\newtheorem{lemma}[theorem]{Lemma}
\newtheorem{proposition}[theorem]{Proposition}
\theoremstyle{definition}
\newtheorem{assumption}[theorem]{Assumption}
\newtheorem{remark}[theorem]{Remark}
\newcommand{\Def}{:=}
\newcommand{\R}{\mathbb{R}}
\renewcommand{\H}{\ensuremath{\mathbb{H}}}
\newcommand{\tr}{\operatorname{tr}}
\newcommand{\tendsto}{\to}
\newcommand{\expect}{\mathbb{E}}
\newcommand{\Ex}{\mathbb{E}}
\newcommand{\Exp}{\mathbb{E}}
\newcommand{\prob}{\mathbb{P}}
\renewcommand{\Pr}{\prob}
\DeclareDocumentCommand \one { o }
{%
\IfNoValueTF {#1}
{\mathbf{1}  }
{\mathbf{1}\left\{ {#1} \right\} }%
}
\newcommand{\Var}{\operatorname{Var}}
\newcommand{\Cov}{\operatorname{Cov}}
\newcommand{\weakto}{\Rightarrow}
\newcommand*\nare[2]{\ensuremath{N^e_{#1}({#2})}}
\newcommand*\naro[2]{\ensuremath{N^o_{#1}({#2})}}
\newcommand*\WEV{\ensuremath{\atomvar}_{1,1}}
\newcommand*\WCM{\ensuremath{\operatorname{WCov}}}
\newcommand*\WEM{\ensuremath{\operatorname{WE}}}
\newcommand*\CLS[2]{\ensuremath{X^{\Wishartmatrix_{{#2}}}_{ {#1} }}}
\newcommand*\CLSf[2]{\ensuremath{X^{\Wmf{{#2}}}_{ {#1} }}}
\newcommand*\PCLSg[1]{\ensuremath{X^{}_{ {#1} }}}
\newcommand*\PCLS[2]{\ensuremath{X^{}_{ {#1}\otimes{#2} }}}
\newcommand*\cmtab[1]{\ensuremath{X_{#1}}}
\newcommand*\cmti[2]{\ensuremath{X^{W_{{#2}}}_{x^{#1}}}}
\newcommand{\Wishartmatrix}{\ensuremath{W}}
\newcommand{\Wm}{\Wishartmatrix}
\newcommand{\ambientmatrix}{\ensuremath{G}}
\newcommand{\submatrix}{\ensuremath{B}}
\newcommand{\smf}[1]{\mathbf{B}({#1})}
\newcommand{\Wmf}[1]{\Wm({#1})}
\newcommand{\atomvar}{\ensuremath{Z}}
\newcommand{\GFFp}{\ensuremath{\mathcal{G}^{\Omega}}}
\DeclareDocumentCommand{\FSS}{ O{s} }{ \mathscr{H}_{{#1}} }
\DeclareDocumentCommand{\FSN}{ O{\cdot} O{s} }{ \left\|{#1}\right\|_{{#2}} }
\DeclareDocumentCommand{\FSNr}{ O{\cdot} O{s} O{\rho} }{ \left\|{#1}\right\|_{{#2},{#3}} }
\DeclareDocumentCommand{\opnorm}{ O{\cdot} }{ \left\|{#1}\right\|_{\text{op}} }
\newcommand{\walkset}[1]{\mathcal{S}_{ {#1} }}
\newcommand{\walksetf}[2]{\mathcal{S}_{{#1}}({#2})}
\newcommand{\atomword}[1]{Z[ {#1} ]}
\newcommand{\rhoweight}[2]{\alpha_{{#1}}[ {#2} ]}
\newcommand{\DBLoint}{ \mathop{ \oint \hspace{-0.15cm} \oint}}
\begin{document}

\title{Spectra of Overlapping Wishart Matrices and the Gaussian Free Field}

\author{Ioana Dumitriu}
\address{Department of Mathematics, University of Washington}
\email{dumitriu@math.washington.edu}
\author{Elliot Paquette}
\address{Department of Mathematics, The Ohio State University}
\email{paquette.30@osu.edu}
\thanks{We acknowledge the support of the NSF through CAREER award DMS-0847661.
EP acknowledges the support of NSF Postdoctoral Fellowship DMS-1304057.}
\date{\today}

%    Abstract is required.
\begin{abstract}
Consider a doubly-infinite array of i.i.d.\ centered variables with
moment conditions, from which one can extract a finite number of
rectangular, overlapping submatrices, and form the corresponding
Wishart matrices. We show that under basic smoothness assumptions, 
centered linear eigenstatistics of such matrices converge jointly to a
Gaussian vector with an interesting covariance structure. This
structure, which is similar to those appearing in \cite{Borodin},
\cite{BorodinGorin}, and \cite{JohnsonPal}, can be described in terms of the height function, and leads
to a connection with the Gaussian Free Field on the upper
half-plane. Finally, we generalize our results from univariate polynomials to a special
class of planar functions. 
\end{abstract}

\maketitle

\section{Introduction}
\label{sec:intro}
%Wishart matrices are likely the oldest family of random matrices.  Originally studied for their relevance in multivariate statistics, they continue to be a fundamental object of study in random matrix theory.  While many of their properties are interesting from either the point of view of random matrix theory or of multivariate statistics, we will focus on the fluctuations of their linear statistics.

% \begin{itemize}
% \item random Wishart matrices. Fix the assumptions, explain that 4th
%   moment matters as we will see. Mention universality. 
% \item linear statistics \begin{itemize}
%     \item ESD convergence (semicircle, Marcenko-Pastur). Zero-d.
%     \item Gaussian fluctuations (Jonsson, Anderson-Zeitouni,
%       Johansson, D.-Edelman; other methods (stochastic calc) Guionnet,
%       Cabanal-Duvillard?) One-d.
%     \item Can one obtain two-d? Overlapping matrices. Minor
%       processes. The Height. Borodin (Johnson-Pal, Borodin-Gorin). Our results;
%       why planar functions, first time beyond polynomials. General
%       form of the theorem.
% \end{itemize}
% \item Approach and Scherbina's results. 
% \item Structure of paper: \begin{itemize}
% \item Covariance calculation for moments; mention importance of 4th
%   moment \& correction term.
% \item Polynomial CLT.
% \item Extension to planar functions.
% \end{itemize}
% \end{itemize}

Alongside Wigner matrices, Wishart matrices are the oldest family of
random matrices, tracing their name to a 1928
\emph{Biometrika} paper by the statistician John Wishart \cite{Wishart}. 
In the centered version, Wishart matrices take the form $W = \Sigma^{1/2} X
X^{T} \Sigma^{1/2} $, where $X$ is a $n \times m$ matrix of i.i.d.\ variables
(often standard normals) with mean $0$ and variance $1$, and $\Sigma$
is the $m \times m$ positive-definite covariance matrix. For the purposes of this
paper, we will only consider the ``null'' case, i.e. when $\Sigma = I_m$ (the
identity matrix), but we will not make normality assumptions on the
entries of the matrix $X$. 

One of the main use of Wishart matrices has been to model sample covariance matrices; as such, their eigenvalue and eigenvector statistics (for simplicity we will refer to them as \emph{eigenstatistics}) have been used to devise likelihood estimation models, and hence have been the object of study by a large community and from different perspectives. In the last couple of decades, interest has been particularly high in the fact that many of these eigenstatistics
exhibit universal behavior, i.e. their asymptotics do not depend on
the distribution of the entries of $X$ (starting with universality of the fluctuation of the largest eigenvalue, \cite{sosh_univ_wish}, \cite{peche_univ_wish}, \cite{bao_pan_zhou_13}, central limit theorems for linear statistics \cite{BaiSilverstein}, to full universality away from the edge \cite{tao_vu_univ_wish}).

The simplest and most widely-known of these eigenstatistics is the empirical eigenvalue distribution, with density given by
\[
\sigma_{W_n} = \frac{1}{n} \sum_{i=1}^n \delta_{\lambda_i}~,
\]
where $\lambda_1, \ldots, \lambda_n$ are the $n$ eigenvalues of the matrix $W$. If one scales down the eigenvalues by $n$, the scaled version of the empirical spectral distribution converges in probability to the well-known Mar\v{c}enko-Pastur distribution $\sigma_{MP}$ (\cite{marcenko67a}, \cite{Jonsson}). One can interpret this as follows: a sequence of random distributions, corresponding to densities $\sigma_{W_n}$, converges to a fixed distribution, $\sigma_{MP}$; this defines a sort of zero-dimensional process. 

This convergence phenomenon can be analyzed and refined, by examining the fluctuations from $\sigma_{MP}$. To this end, for a function $f$, one may define the \emph{(centered) linear statistic $f$ of a matrix $W$}:
$$\cmtab{f}^{\Wm}:=\tr( f(\Wm)) - \Ex \tr (f (\Wm)) =
\sum_{i=1}^m f(\lambda_i) - \Ex f(\lambda_i)~,$$ where $\lambda_1,
\ldots, \lambda_m$ are the eigenvalues of $\Wm$ and $\Ex$ denotes expectation over the ensemble. Aside from trying to understand a deep and important mathematical phenomenon, the asymptotic properties of centered linear statistics for sample covariance matrices have interesting potential applications \cite{RMSE}.

The centered linear statistics of Wishart matrices have been shown to have Gaussian fluctuations (\cite{Jonsson},  \cite{CabanalDuvillard}, \cite{BaiSilverstein}, \cite{anderson-zeitouni_clt_wish}, \cite{Shcherbina11}). More specifically, they converge to a Gaussian process on the real line; that is, for sufficiently smooth test functions $f,$ the linear statistics converge (without rescaling) to normal variables with computable covariance structure\footnote{Although not specifically for Wishart ones, this covariance structure has been shown in some cases to be universal, provided the limiting support of the eigenvalues is an interval, e.g. \cite{Johansson}.}. Such a Gaussian process can be seen as a one-dimensional process on the real line. 

It is therefore a natural question to ask if this deep phenomenon can be extended further, and if by digging a bit more one might uncover a two-dimensional process as a limit. This seems to be indeed the case, as we shall show here; toward this purpose one must add one more dimension, and focus on ``overlapping'' matrices. Investigations of the joint eigenvalue distributions (or statistics thereof) in overlapping matrices have sometimes been called ``minor processes'' (\cite{johansson-nordenstam06}, \cite{forrester_nordenstam09}).

This question has been shown to have affirmative answer in the context of Wigner ensembles \cite{Borodin}, $\beta$-Jacobi ensembles \cite{BorodinGorin}, and $d$-regular graphs \cite{JohnsonPal}. 
% As with many linear statistics of classical random matrix models,
% linear statistics of random matrices are known to satisfy a central
% limit theorem for many classes of test functions $f$ (\cite{Johansson}
% for \cite{BaiSilverstein}.  In particular, for sufficiently smooth test functions $f,$ the linear statistics converge, without rescaling, to normal variables.  
% The covariance structure of these normal variables has been observed
% to be universal, provided the limiting support of the eigenvalues is an
% interval~\cite{Johansson}. \textcolor{red}{you mean Wigner?}
More specifically, the covariance of these linear statistics was observed by
Borodin to be expressible in terms of the $0$-boundary Gaussian free field~\cite{Borodin}. He showed that the linear statistics of submatrices of a single Wigner behave like path integrals of test functions against a family of correlated Gaussian free fields (i.e., two-dimensional objects).  
In this vein, our objective here is two-fold: we will describe the asymptotics of linear statistics of eigenvalues of Wishart matrices formed from picking (overlapping) submatrices of a single large matrix, and we will show convergence of a large class of ``planar'' centered linear statistics to spatial averages of the (two-dimensional) Gaussian free field.

To this end, we define an infinite array of random variables $\atomvar_{i,j},$ $i \geq 1$ and $j \geq 1.$  
%These entries are centered and have variance $1$ in the real case or variance QUOI in the complex case.  
%\subsection{Assumptions}
We allow for $\atomvar_{i,j}$ to be an independent family of real, complex or quaternion random variable, corresponding to the classical $\beta=1,2,4$ trichotomy.  In all cases, all components (e.g. $\Re \atomvar_{i,j}$ and $\Im \atomvar_{i,j}$) of the variable are mutually independent.  Assume further that the variables satisfy the following moment assumptions.
%\begin{assumption}
%	\label{a:all}
%\begin{eqnarray*} % \label{conds}
%\Ex \left| \atomvar_{i,j} \right|^2 = 1,
%\hspace{1cm}
%\Ex \left| \atomvar_{i,j} \right|^4 = 1 + \frac{2}{\beta},
%\text{ and} 
%\hspace{1cm}
%\sup_{i,j} \Ex \left| \atomvar_{i,j} \right|^k < \infty, k \in \mathbb{N}.
%\end{eqnarray*}
%\end{assumption}
%
\begin{assumption}
	\label{a:4e}
	There is an $\epsilon > 0$ so that
\begin{eqnarray*}
\Ex \atomvar_{i,j} = 0,
\hspace{1cm}
\Ex \left| \atomvar_{i,j} \right|^2 = 1,
\hspace{1cm}
\Ex \left| \atomvar_{i,j} \right|^4 = 1 + \frac{2}{\beta},
\text{ and} 
\hspace{1cm}
\sup_{i,j} \Ex \left| \atomvar_{i,j} \right|^{4+\epsilon} < \infty.
\end{eqnarray*}
\end{assumption}
\noindent The first, second and fourth moments are chosen to agree with the standard real, complex and quaternion Gaussian variables.
%\begin{remark}
%  By a standard truncation argument, it is possible to relax the assumptions on moments higher than $4$ to $\sup_{i,j} \Exp |Z_{i,j}|^{4+\epsilon} < \infty$ or even to a Lindeberg type condition on the $4$-th moment (see e.g.~\cite{Shcherbina11}). The $4$th moment condition is necessary for obtaining the Gaussian free field as a limit, as we will show in Section \ref{sec:covar}. 
%\end{remark}

From this infinite matrix, we extract rectangular submatrices $\submatrix_i$ for $i=1,\ldots,k$ with sizes $m_i(L) \times n_i(L)$ that overlap on $m_{i,j}(L)$ rows and $n_{i,j}(L)$ columns. 
%Thus, each represents a matrix of noise where some amount of noise is shared between the various matrices.
From these submatrices, we form the Wishart matrices $\Wishartmatrix_i \Def \submatrix_i^*\submatrix_i/L$ for $i \in [k],$ and we study the limiting behavior of $(\cmti{p_1}{1},\cmti{p_2}{2},\ldots,\cmti{p_k}{k})$ for natural numbers $p_i$ provided that all of intersection parameters scale linearly with $L.$  

Our first result explores the distributional limits of these centered linear statistics, which can be viewed as a generalization of known distributional limit theorems for centered linear statistics of a single matrix (i.e., when $W_1 = W_2$).
\begin{proposition}
\label{prop:CLS}
Suppose that $\left\{ \atomvar_{i,j} \right\}$ satisfy Assumption~\ref{a:4e}, that $\frac{m_i}{L} \to \mu_i,$ $\frac{n_i}{L} \to \nu_j$ and that 
\(
\frac
{m_{i,j}n_{i,j}L^2}
{m_in_im_jn_j}
\to \theta_{i,j},
\)
for all $1 \leq i,j \leq k.$  It follows that $(\cmti{p_1}{1},\cmti{p_2}{2},\ldots,\cmti{p_k}{k})$ converges in distribution to a centered Gaussian vector $(\xi_1,\xi_2,\ldots,\xi_k)$ with covariance
\[
\Ex \xi_{i} \xi_j
=-\frac{4 p_i\cdot p_j}{\beta \pi^2} 
\!\!
\DBLoint
\!\!
\left(\mu_i \!+\! \nu_i \!+\! 2\Re \zeta_i\right)^{p_i-1}
\!\!
\left(\mu_j \!+\! \nu_j \!+\! 2\Re \zeta_j\right)^{p_j-1}
\!\!
K_{\theta_{i,j}}(\zeta_i,\zeta_j)
\frac{\Im \zeta_i}{\zeta_i}
\frac{\Im \zeta_j}{\zeta_j}
d\zeta_i\,d\zeta_j,
\]
where the contours are semicircles in $\H$ centered at the origin with radii $\sqrt{\mu_i\nu_i}$, respectively, $\sqrt{\mu_j\nu_j}$, and where
\(
K_{\theta_{i,j}}(\zeta_i,\zeta_j)
=
\log \left|
\frac{
  {\theta_{i,j}^{-1} - \zeta_i \zeta_j}
}
{
  {\theta_{i,j}^{-1} - \zeta_i \bar{\zeta_j}}
}
\right|
\)
.
\end{proposition}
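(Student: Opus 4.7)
The plan is to prove this by the method of moments, following the combinatorial trace expansion that is standard for linear eigenstatistics of Wishart matrices, but carefully tracking how the overlap structure between the submatrices $\submatrix_i$ enters.

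First, I would expand each trace as a sum over closed walks. Writing $\Wm_i = \submatrix_i^*\submatrix_i / L$, we have
\[
\tr(\Wm_i^{p_i}) = \frac{1}{L^{p_i}} \sum_{w} \atomvar[w],
\]
where $w$ ranges over closed cyclic words $(r_1,c_1,r_2,c_2,\ldots,r_{p_i},c_{p_i})$ with $r_a$ indexing rows and $c_a$ columns of $\submatrix_i$, and $\atomvar[w] = \prod_a \atomvar_{r_a,c_a}\overline{\atomvar_{r_{a+1},c_a}}$. Subtracting the mean gives the centered statistic $\cmti{p_i}{i}$. Then
\[
\Ex \prod_{\ell=1}^{k} \cmti{p_\ell}{\ell}
= \frac{1}{L^{\sum p_\ell}} \sum_{w_1,\ldots,w_k} \Ex_{c}\!\!\left[\prod_\ell \atomvar[w_\ell]\right],
\]
where $\Ex_c$ denotes the joint cumulant (so disconnected contributions drop). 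By independence and centeredness, a nonzero term requires that every $\atomvar_{a,b}$ appearing in the product appear at least twice, and that the edge-multigraph over the union of all walks be connected (otherwise $\Ex_c$ kills the contribution).

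Second, I would perform the standard leading-order analysis. The usual Wishart-walk combinatorics (see e.g.\ the Bai--Silverstein CLT) shows that among connected configurations, the dominant contribution comes from walks that are \emph{pairwise edge-paired} and whose underlying bipartite graph is a tree with exactly one extra edge per ``intersection''; all higher-complexity configurations lose at least a power of $L^{-1}$ because of the ratio (edges used) vs.\ (summation indices). This immediately implies that mixed cumulants of order $\geq 3$ are $o(1)$, giving Gaussianity, and that only pairs $(w_i,w_j)$ contribute to the covariance.

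Third, and this is the heart of the computation, I would compute the pair contribution for $(\cmti{p_i}{i},\cmti{p_j}{j})$. The pair of walks glues along some number $s \geq 1$ of shared edges, each of which must lie in $\submatrix_i \cap \submatrix_j$; summing the index factors one gets the ratio
\[
\frac{(m_{i,j}n_{i,j})^s (m_i n_i)^{p_i - s}(m_j n_j)^{p_j - s}}{L^{p_i + p_j}}
\;\longrightarrow\; \theta_{i,j}^{s}(\mu_i\nu_i)^{p_i - s}(\mu_j\nu_j)^{p_j - s}
\]
by the scaling hypothesis. The combinatorial coefficient counting the planar pairings of the two walks with $s$ prescribed overlap edges is a standard Narayana/Catalan-type quantity; summing this against the above factor in a generating function in the ``spectral parameters'' $\mu_i,\nu_i$ (and $\mu_j,\nu_j$) is precisely the setting where the semicircular contour parametrization of the Mar\v{c}enko--Pastur density becomes convenient: writing $\lambda = \mu + \nu + 2\Re\zeta$ with $|\zeta|=\sqrt{\mu\nu}$, $\Im \zeta > 0$, the $p$-th moment of $\sigma_{MP}$ is $\frac{p}{\pi}\int (\mu + \nu + 2\Re\zeta)^{p-1}\frac{\Im \zeta}{\zeta}\,d\zeta$ (up to a $\beta$ factor). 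This is what produces the $(\mu_i+\nu_i+2\Re\zeta_i)^{p_i-1}$ factor in the claimed formula, along with the $p_i p_j /(\beta\pi^2)$ prefactor and the integration density $\Im\zeta_i \Im \zeta_j /(\zeta_i \zeta_j)$.

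Fourth, the kernel $K_{\theta_{i,j}}$ arises by summing the geometric-type series in $s$ (the number of shared edges). Concretely, after the contour substitution the factor $\theta_{i,j}^s$ conspires with the pairing combinatorics to produce the sum
\[
\sum_{s \geq 1} \frac{1}{s}\left( \frac{\theta_{i,j}(\zeta_i\zeta_j + \zeta_i\bar\zeta_j)}{\text{something}} \right)^s,
\]
which telescopes into the logarithmic kernel
\[
K_{\theta_{i,j}}(\zeta_i,\zeta_j) = \log\left| \frac{\theta_{i,j} - \zeta_i\zeta_j}{\theta_{i,j} - \zeta_i\bar\zeta_j}\right|.
\]
The $\beta$-dependence comes exclusively from the fourth-moment condition (\ref{conds}): each ``self-paired'' edge carries $\Ex|\atomvar|^2 = 1$, but the leading-order contribution to a pair of closed walks requires counting pairings where two edges of distinct walks coincide; these are insensitive to $\Ex|\atomvar|^4$, whereas the absent $(\Ex|\atomvar|^4 - 1 - 2/\beta)$-terms would introduce a correction. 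This is where the $1/\beta$ in the formula enters.

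\textbf{Main obstacle.} The technically delicate step is the precise combinatorial bookkeeping of edge-paired walk pairs in two \emph{overlapping} rectangular submatrices: one must show that only the ``minimally crossing'' pairings (planar pairs with a single chain of $s$ shared edges) survive to leading order, and that the enumeration of these pairings, when resummed against $\theta_{i,j}^s$, matches the logarithmic kernel on the nose. The rest --- Gaussianity from vanishing higher cumulants, and the contour representation of Mar\v{c}enko--Pastur moments --- is by now standard and follows the templates of \cite{Jonsson}, \cite{BaiSilverstein}, and the overlapping-Wigner analysis of \cite{Borodin}.
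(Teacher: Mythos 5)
Your proposal is built around the same moment-method / closed-walk combinatorics that the paper uses, and the broad skeleton (vanishing higher cumulants, pairwise covariance from paired walks, resummation producing the logarithmic kernel via the semicircular contour parametrization of Mar\v{c}enko--Pastur) is correct. However, there are two substantive gaps in the covariance step.

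First, you treat the contributing pairs of walks as a single family indexed by the number $s$ of shared edges. The paper's Section~\ref{sec:covar} shows the relevant pairs split into two combinatorially distinct types that must be treated separately: (a) two tree walks glued along a single common edge, whose contribution $T_1$ carries the fourth-moment factor $\Ex|\atomvar|^4-1$; and (b) two unicyclic walks sharing a common cycle of even length $r\geq 4$ with pendant trees attached, whose contribution $T_2$ carries only second-moment factors but involves a cycle-orientation choice. Folding both into a generic ``$s$ shared edges'' sum with a $\theta^s$ factor conflates the two and obscures why the 4th-moment hypothesis is \emph{needed}: $T_1$ and $T_2$ must be re-split into a GFF piece plus an error term proportional to $\Ex|\atomvar|^4-3$ (the paper's $\WCM$ and $\WEM$), and it is only when $\Ex|\atomvar|^4 = 1+2/\beta$ that the error term vanishes and the log kernel emerges cleanly. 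Your ``something'' placeholder in the geometric series is exactly where this bookkeeping must be done, and it is not a routine gap.

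Second, your claim that ``the $\beta$-dependence comes exclusively from the fourth-moment condition'' is incorrect. As the paper's Remark~\ref{rem:other_beta} explains, the cycle contribution $T_2$ already carries its own $\beta$-dependence through the cycle-orientation count: for $\beta=1$ both orientations contribute (factor $2$), for $\beta=2$ aligned cycles vanish in expectation (factor $1$), and for $\beta=4$ the factor is $1+(-1/2)^r$. The fourth-moment condition feeds the $\beta$-dependence of the tree-glued term $T_1$, but the $1/\beta$ in the final answer is the joint result of both mechanisms, not of the fourth moment alone.
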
 
\begin{remark}
	In the case that the $4$-th moments of $\left\{ \atomvar_{i,j} \right\}$ are not equal to $1 + \frac{2}{\beta},$ but are instead all equal to some other constant, the same proposition holds with an added term to the covariance (given by $\WEM_{i,j}$ from Lemma \ref{lem:wish_cov_GF}, when $\beta=1$).
\end{remark}
Similar covariance structures appear in the work of Borodin on minors of Wigner matrices~\cite{Borodin}, of Borodin and Gorin in the case of $\beta$-Jacobi matrices~\cite{BorodinGorin}, and of Johnson and Pal in the case of adjacency matrices of regular graphs \cite{JohnsonPal}.  Further, the exact limiting process has a description in terms of the same correlated Gaussian free fields.  We will explain how this theorem can be reformulated in terms of the distributional convergence of the \emph{height function}.  In this way, this proposition can be seen as an analogue of what Borodin proves for Wigner matrices (Theorem 2 of~\cite{Borodin}).  

There are two key ingredients in this approach: the first one, as mentioned, is establishing and explaining the connection to the Gaussian free field via the height function. The second is the introduction of planar test functions. We present the connection in Section \ref{height}, we give the necessary definitions and our main result in Section \ref{planarf}, and we sketch the outline of the proof and give the ``main ingredients'' (supporting results) in Section \ref{outlinepf}. The rest of the paper is then dedicated to proving the main ingredients.

\subsection{The Height Function} \label{height}

%We will show the connection of the height function to the pullback of the zero-boundary Gaussian Free Field on the upper half plane.  

We will give a reformulation of this proposition in the case where all matrices come from the upper left corner of a matrix $G$ of size $\left[ \mu L\right] \times \left[ \nu L \right]$ with entries $Z_{i,j}.$  This will provide the connection to the Gaussian free field.  For any $y \geq 0$ define $\smf{y}$ as having entries
\(
\smf{y}(i,j) = Z_{i,j},
\)
for $1 \leq i \leq \left[ y \mu L \right]$ and $1 \leq j \leq \left[ y \nu L \right].$  Symmetrize $\smf{y}$ by taking $\Wmf{y} = \smf{y}^* \smf{y}/L.$  As we will only consider the fluctuations of these matrices, the order of the symmetrization is inconsequential; hence we may assume $\mu \geq \nu,$ as in the other case the distribution of the centered linear statistics are the same. 

For an interval $I \subseteq \mathbb{R}$, let $N_I^{\Wmf{y}}$ be the number of eigenvalues of $\Wmf{y}$ in $I.$  Define the scaled centered height function to be the integer-valued function on $\mathbb{R} \times [0,1]$ given by
\[
H(x,y) = N_{[x,\infty)}^{\Wmf{y}} - \Exp N_{[x,\infty)}^{\Wmf{y}}.
\]
As we have that $H(x,y) \to 0$ almost surely as $|x| \to \infty,$ we have that for any $f \in C_c^\infty(\R)$, 
\[
-\int_{\R} f'(x) H(x,y)~dx = \CLSf{f}{y}~,
\]
using integration by parts.

From the Mar\v{c}enko-Pastur law, we expect that all increases of the height function occur for $x$ and $y$ in $\mathbb{R} \times [0,1]$ such that $\left(\sqrt{\mu} - \sqrt{\nu}\right)^2 \leq \frac{x}{y} \leq \left( \sqrt{\mu} + \sqrt{\nu} \right)^2$.  We will define a coordinate chart of this region of the plane to map it correctly to the upper half plane.  
\begin{figure}
%\begin{minipage}{0.9\linewidth}
  \begin{tikzpicture}[scale=0.80]
      \coordinate (a) at (0.33,4);
      \coordinate (b) at (3,4);

      \fill[color = red] (a) circle (0.1cm);
      \draw (a) node[above=0.3] { $ \left( \sqrt{\mu} - \sqrt{\nu}\right)^2 $};
      \fill[color = red] (b) circle (0.1cm);
      \draw (b) node[above right=0.3 and 0.0] { $ \left( \sqrt{\mu} + \sqrt{\nu}\right)^2 $};

    \draw[thin,dotted] (0,0) grid (4.1,4.1);
    \draw[->] (0,0) -- (4.2,0) node[right] { $x$ };
    \draw[->] (0,0) -- (0,4.2) node[above] { };

      \draw (-1pt, 4) -- (1pt, 4) node[left] {$y=1$};
      \draw (-1pt, 3) -- (1pt, 3) node[left] {$\frac{3}{4}$};
      \draw (-1pt, 2) -- (1pt, 2) node[left] {$\frac{1}{2}$};
      \draw (-1pt, 1) -- (1pt, 1) node[left] {$\frac{1}{4}$};

    \draw[-,dashed] (0,0) -- (a) -- (b) -- cycle;

%% Trajectories on the left
    \draw[-] ( $0.75*(a)$ ) --( $0.75*(b)$ ); 
    \draw[thick,->] ( $0.75*(b)$ ) --( $0.5625*(a)+0.1875*(b)$ ); 
    \draw[-] ( $0.5*(a)$ ) --( $0.5*(b)$ ); 
    \draw[thick,->] ( $0.5*(b)$ ) --( $0.375*(a)+0.125*(b)$ ); 
    \draw[-] ( $0.25*(a)$ ) --( $0.25*(b)$ ); 
    \draw[thick,->] ( $0.25*(b)$ ) --( $0.1875*(a)+0.0625*(b)$ ); 
    \begin{scope}[shift={ (5.75,0)}]
%% Omega symbol
      \draw[thick,->] (0,2) arc (70 : 110 : 2 );
      \draw[thick,-] (0,2) arc (70 : 90 : 2 )
      node[align=center,above] {$\Omega^{-1}$};
     \end{scope}

    \begin{scope}[shift={ (10,0)}]
    \draw[thin,dotted] (-3.9,0) grid (3.9,3.9);
    \draw[-,dashed] (4,0) arc (0: 180: 4) -- cycle;    
    \draw[->] (0,0) -- (0,4.2) node[above] { };
    \coordinate (O2) at (0,0);

%% Trajectories on the right
%    \draw[-] (2,0) arc (0 : 180 : 2 );
%    \draw[very thick,->] (2,0) arc (0 : 120 : 2 );
%    \draw[-] (2.82,0) arc (0 : 180 : 2.82 );
%    \draw[very thick,->] (2.82,0) arc (0 : 120 : 2.82 );
%    \draw[-] (3.46,0) arc (0 : 180 : 3.46 );
%    \draw[very thick,->] (3.46,0) arc (0 : 120 : 3.46 );

%      \draw (4,-1pt) -- (4,1pt) node[below] {$1$};
%      \draw (3.46,-1pt) -- (3.46,1pt) node[below] {$\sqrt{\frac{3}{4}}$};
%      \draw (2.82,-1pt) -- (2.82,1pt) node[below] {$\sqrt{\frac{1}{2}}$};
%      \draw (2,-1pt) -- (2,1pt) node[below] {$\sqrt{\frac{1}{4}}$};
    \draw[-] (1,0) arc (0 : 180 : 1 );
    \draw[very thick,->] (1,0) arc (0 : 120 : 1 );
    \draw[-] (2,0) arc (0 : 180 : 2 );
    \draw[very thick,->] (2,0) arc (0 : 120 : 2 );
    \draw[-] (3,0) arc (0 : 180 : 3 );
    \draw[very thick,->] (3,0) arc (0 : 120 : 3 );

      \draw (4,-1pt) -- (4,1pt) node[below] {$1$};
      \draw (3,-1pt) -- (3,1pt) node[below] {${\frac{3}{4}}$};
      \draw (2,-1pt) -- (2,1pt) node[below] {${\frac{1}{2}}$};
      \draw (1,-1pt) -- (1,1pt) node[below] {${\frac{1}{4}}$};

     \end{scope}
\end{tikzpicture}
%\includegraphics[height=8in]{Omega.pdf}
%\hspace{-10cm}
\caption{The bijection $\Omega^{-1}$} \label{fig:chart}
%\end{minipage}
\end{figure}
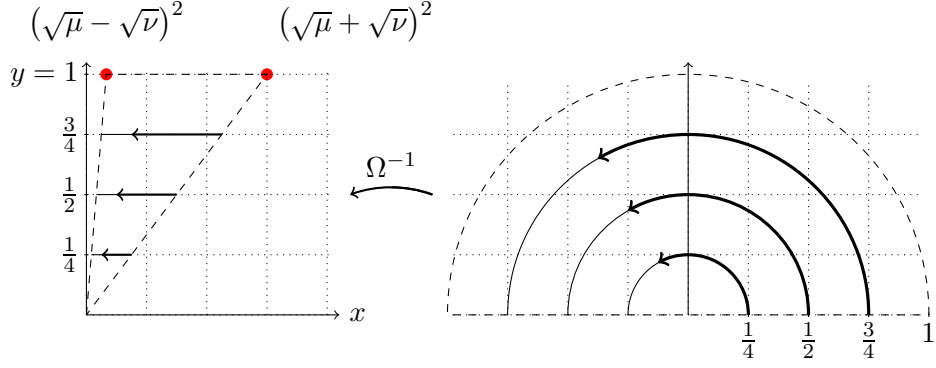
Let $\Omega^{-1}(z)~:\H \to \H$ be given by 
\begin{align*}
\Omega^{-1}(z) &\Def (x(z),y(z)) \\
x(z) &= |z|\left(\mu + \nu\right) + 2\sqrt{\mu\nu}{\Re z} \\
y(z) &= |z|.
\end{align*}
This can be seen to bijectively map concentric semicircles centered at $0$ to horizontal line segments in the support of the Mar\v{c}enko-Pastur law (see Figure~\ref{fig:chart}). We can now reinterpret Proposition~\ref{prop:CLS} in terms of convergence of $H$ to the pullback $\GFFp$ of the Gaussian free field under $\Omega.$  This can be defined as a random Schwartz distribution $\GFFp$ acting on $C_c^\infty(\H),$ for which all
\(
\int f(z) \GFFp(z) |dz|^2 \Def \left< f, \GFFp \right>
\) 
are normally distributed and 
\[
\Cov( \left< f, \GFFp \right>, \left< g, \GFFp \right> ) = \int_{\H^2} f(z_1) g(z_2) K_{\Omega}(z_1,z_2) |d\Omega(z_1)|^2 |d\Omega(z_2)|^2,
\]
where 
\(
K_\Omega(z_1,z_2)=
\frac{1}{2\pi} 
\log \left| \frac
{\Omega(z_1)- \bar \Omega(z_2)}
{\Omega(z_1)- \Omega(z_2)}
\right|
\) is the Green's function for the upper half plane composed with the coordinate chart $\Omega$.  The Gaussian free field, while being too rough to be a function, is sufficiently regular that it is also possible to define its action on rougher test functions.  Especially, we can define
\[
\int_{
\H^2
} f(x,y) \GFFp(x,y)\,dxd\rho(y)
\]
for probability measures $\rho$ and we can formally recast Proposition~\ref{prop:CLS} as showing a form of finite-dimensional marginal convergence of $H$ to $\GFFp(z).$  Namely, we show in this paper  that for some class of test functions $f$ (to be defined later),  
\begin{equation}
\label{eq:desired}
\int_{\H^2} f(x,y) H(x,y)\,dxd\rho(y)
\weakto
\sqrt{
  \frac{c}
{
  {\beta}
}
}
\int_{\H^2} f(x,y) \GFFp(x,y)\,dxd\rho(y)
\end{equation}
for some absolute constant $c>0$ 
as $L\to \infty.$
%\begin{equation}
%\label{eq:desired}
%\int_{\substack{|z|=y \\
%z \geq 0
%}} f(z) H(z) |dz|
%\weakto
%\frac{1}
%{
%\sqrt{
%{2\beta \pi}
%}}
%\int_{\substack{|z|=y \\
%z \geq 0
%}} f(z) \GFFp(z) |dz|.
%\end{equation}

\begin{remark}
%{\bf No pullback GFF comment!}
It may be tempting to conclude that $H \circ \Omega^{-1}$ converges to the standard Gaussian free field.
This does not follow in a natural sense from Proposition~\ref{prop:CLS}.
The difficulty lies in the behavior of the derivative of $\Omega^{-1}(z),$ which is singular along the real axis. 
%If we parameterize the first integral \textcolor{red}{UM, WHAT??}, then we have
If we integrate $H \circ \Omega^{-1}$ against a test function in $\H^2,$ along a semicircle of radius $y,$ we get
\[
\int_{0}^\pi f(ye^{i\theta}) (H \circ \Omega^{-1})(ye^{i\theta}) yd\theta
=
\int_{\theta=0}^{\theta=\pi} (f \circ \Omega)(x,y) H(x,y) \frac{dx}{2\sqrt{\mu \nu}\sin \theta}.
\]
where we have changed variables so that
$\cos \theta = \frac{x-(\mu + \nu)y}{2y\sqrt{\mu\nu}}.$  To apply Proposition~\ref{prop:CLS}, we need $(f \circ \Omega)(x,y) / \sin \theta$ to be a polynomial in $x$, which puts an awkward set of boundary conditions on the class of functions for which we can prove weak convergence in probability.
%$f$ to vanish near the
%However, composing with $\Omega^{-1}$ leads to singularities in the derivative of $f$ near the real axis, which causes technical complications.
%On the other hand, from Proposition~\ref{prop:CLS}, it does follow immediately that for fixed $y$ and polynomial $f,$ 
%\[
%\int_{\R} f(x,y) H(x,y) dx \weakto 
%\sqrt{
%\frac{2\mu\nu}
%{\beta \pi}
%}\int_{|z|=y} (f\circ\Omega^{-1})(z) \GFF(z) \sin \theta |dz|,  
%\]
%where $\sin \theta = \frac{\Im z}{|z|}$ (CE: constants are probably off).  Note that there are subtle technical differences between these statements, such as the truncation of the integral and the location of the extra factor of $\sin \theta.$  
\end{remark}

\subsection{Planar test functions} \label{planarf}

%Before proceeding to how to resolve these technical issues, we first generalize the setup.  
The test functions in~\eqref{eq:desired} are to a certain extent unsatisfactory: they are of the form $f(x) \delta(r-y)$ for $\delta$ the standard Dirac delta function; they have a planar definition, but are still one-variable objects. 
As mentioned before, the Gaussian free field is a two-dimensional object; we would like to apply it to \emph{bona fide} two-dimensional functions. 

We take a step in this direction; note that this is the first time when a class of (actual) two-dimensional functions appears in a Gaussian free field convergence theorem in this context; previous results (\cite{Borodin}, \cite{BorodinGorin}, \cite{JohnsonPal}) involved exclusively univariate polynomial functions multiplied by $\delta(r-y)$. 

Let $\psi(x,y)$ be a two-dimensional function. We define the centered linear statistic $\PCLSg{\psi}$ by
\[
\PCLSg{\psi} \Def \int_{0}^\infty \CLSf{\psi(\cdot,y)}{y}~dy~,
\] 
for those functions $\psi$ for which this makes sense.  Note that $\Wmf{y}$ is a step function, and hence this integral is a finite weighted sum of linear statistics. 

%This formalization does not allow us to recover the single-matrix linear statistics
%, which would be desirable for an honest generalization.  
%Thus, we would like to allow for a class of planar measures.  

We need to consider measures which are sufficiently smooth in the $x$ coordinate to have a limit in this scaling, and thus we choose to examine test functions of the form $\psi(x,y)\rho(y)$ where $\psi$ is a Borel-measurable function and $\rho$ is a compactly supported measure.  For such measures, the linear statistic $\PCLSg{\psi(x,y)\rho(y)}$ is naturally given by
\[
\PCLSg{\psi(x,y)\rho(y)} = \int_{0}^\infty \CLSf{\psi(\cdot,y)}{y}~d\rho(y)~,
\]
which can be seen to agree with the definition given for $X_{\psi}$ when $\rho$ is absolutely continuous.

The precise regularity in the $x$ coordinate we require is in terms of the fractional Sobolev norms.   Define the norm $\FSN$ by
\[
\FSN[\phi]^2 = \int_{\R} (1+k^2)^{s} | \hat \phi(k) |^2~dk,
\] 
with $\hat\phi$ the Fourier transform
\[
\hat\phi(k) = \frac{1}{2\pi} \int_{\R} e^{-ikx} \phi(x)~dx.
\]
For a fixed, compactly supported probability measure $\rho$, we define the norm
\[
\FSNr[\psi(x,y)]^2 \Def \int_{0}^\infty \FSN[\psi(\cdot,y)]^2~d\rho(y).
\]
We may now state our main theorem for planar linear statistics.
\begin{theorem}
\label{thm:pcls}
Suppose Assumption~\ref{a:4e} on $\left\{ \atomvar_{i,j} \right\}$ holds,
and suppose $\beta=1.$  For any $\psi$ and any compactly supported probability measure $\rho$ for which $\FSNr[\partial_x\psi] < \infty$ for some $s > \tfrac 32,$ 
the centered linear statistic
\[
\PCLSg{\partial_x \psi(x,y) \rho(y)}
=
\int_{\H} \psi(x,y)H(x,y) dx d\rho(y) 
\]
%\PCLSg{\psi\rho}
converges in distribution to a centered normal $\xi$ with variance given by
\(
\Var \xi = \frac{8\mu \nu}{\beta \pi} \Var \left< \psi\rho, \GFFp \right>
\)
as $L \to \infty.$
\end{theorem}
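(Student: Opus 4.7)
The plan is to reduce Theorem~\ref{thm:pcls} to Proposition~\ref{prop:CLS} by a density argument: first establish the distributional convergence for $\psi$ polynomial in $x$, then extend to arbitrary $\psi$ using a uniform-in-$L$ variance bound governed by $\FSNr[\partial_x\psi]$.

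\textbf{Polynomial case.} For $\psi(x,y)=\sum_{j} c_j(y)x^{p_j}$ with $c_j$ simple Borel-measurable functions of $y$, the integration-by-parts identity from Section~\ref{height} rewrites
\[
\int_{\H}\psi(x,y) H(x,y)\,dx\,d\rho(y)
\]
as a finite linear combination of centered linear statistics $\CLSf{q_j}{y_i}$ for polynomials $q_j$ and finitely many $y_i$ (using that $\Wmf{y}$ is piecewise constant in $y$ and that $\rho$ is compactly supported). Proposition~\ref{prop:CLS} gives joint moment-convergence of these to a centered Gaussian vector whose covariance matches $\Var\left<\psi\rho,\GFFp\right>$, proving the theorem for such $\psi$.

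\textbf{Uniform variance bound.} The main technical input is, for all $L$ sufficiently large and all $\phi$ with $\FSNr[\partial_x\phi]<\infty$ for some $s>\tfrac{3}{2}$,
\[
\Var\!\left(\int_{\H}\phi(x,y)H(x,y)\,dx\,d\rho(y)\right) \leq C\,\FSNr[\partial_x\phi]^2.
\]
I would derive this by writing each slice $\int\phi(x,y)H(x,y)\,dx = \CLSf{\Phi(\cdot,y)}{y}$ for an antiderivative $\Phi$ of $\phi$ in $x$, then combining the Fourier representation $\Phi(x,y)=\int\hat\Phi(k,y)e^{ikx}\,dk$ with Sobolev-type variance bounds for single Wishart linear statistics (of the type developed in~\cite{Shcherbina11,BaiSilverstein}) and joint covariance estimates for the pairs $(\Wmf{y_1},\Wmf{y_2})$ extracted from the moment calculations underlying Proposition~\ref{prop:CLS}. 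Double integration against $d\rho(y_1)d\rho(y_2)$ produces the $\FSNr$ norm, and the $s>\tfrac{3}{2}$ threshold is forced by the large-$|k|$ growth of the covariance kernel in the real $(\beta=1)$ case.

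\textbf{Approximation and conclusion.} With probability tending to one the spectrum of each $\Wmf{y}$ is contained in a fixed compact interval; this lets us truncate $\psi(\cdot,y)$ outside such an interval at negligible cost and then approximate by polynomials $\psi_n(\cdot,y)$ on it, extended smoothly so that $\FSNr[\partial_x(\psi-\psi_n)]\to 0$. Writing $Y_L=\int\psi H\,dx\,d\rho$ and $Y_L^n=\int\psi_n H\,dx\,d\rho$, the polynomial case gives $Y_L^n\weakto\xi^n$ Gaussian with $\Var\xi^n=\Var\left<\psi_n\rho,\GFFp\right>$, which tends to $\Var\left<\psi\rho,\GFFp\right>$ by an analogous Sobolev bound on the GFF covariance kernel. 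The uniform variance bound gives $\Var(Y_L-Y_L^n)\leq C\,\FSNr[\partial_x(\psi-\psi_n)]^2\to 0$ uniformly in $L$, and a standard diagonalization/Slutsky argument yields $Y_L\weakto\xi$ with $\Var\xi=\Var\left<\psi\rho,\GFFp\right>$.

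\textbf{Main obstacle.} The technical crux is the uniform variance bound with exponent $s>\tfrac{3}{2}$. The threshold is specific to the real $(\beta=1)$ case and is expected to be essentially sharp: it reflects the regularity of the edge fluctuations of real Wishart matrices. Obtaining it requires sharp cumulant/resolvent estimates for single-matrix linear statistics together with their joint extension to two overlapping Wisharts. Once this bound is in place, the remainder of the argument is a routine density and double-limit computation.
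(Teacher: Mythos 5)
Your overall plan matches the paper's: reduce to a polynomial CLT, prove a uniform-in-$L$ variance bound involving $\FSNr[\partial_x\psi]$, then conclude by density. But two of your three steps diverge from what the paper actually does, and one of the divergences is a real efficiency loss.

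\textbf{Variance bound.} You propose deriving the bound
\(
\Var\left(\int\phi H\,dx\,d\rho\right)\leq C\,\FSNr[\partial_x\phi]^2
\)
from ``joint covariance estimates for the pairs $(\Wmf{y_1},\Wmf{y_2})$ extracted from the moment calculations underlying Proposition~\ref{prop:CLS}.'' This is both unnecessary and problematic: the moment calculations only produce \emph{limiting} covariances, not uniform-in-$L$ bounds, so turning them into the needed estimate would require a separate argument. The paper's Lemma~\ref{lem:rho_var} bypasses the joint matrix structure entirely. Because $\rho$ is a probability measure, Jensen's inequality gives
\(
\Var\int_0^\infty \CLSf{\psi(\cdot,y)}{y}\,d\rho(y) \leq \int_0^\infty\Var\CLSf{\psi(\cdot,y)}{y}\,d\rho(y),
\)
and the integrand is controlled \emph{slice by slice} by Shcherbina's single-matrix Sobolev bound (Proposition~\ref{prop:shcher_var}). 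No two-matrix covariance estimates are needed, and the $s>\tfrac32$ threshold is inherited directly from the single-matrix result rather than re-derived.

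\textbf{Truncation.} You invoke ``with probability tending to one the spectrum is contained in a fixed compact interval'' to control the cutoff error, but a high-probability event is not enough to bound $\Exp\bigl|\PCLS{p\tau_M}{\rho} - \PCLS{p}{\rho}\bigr|$, since a polynomial can blow up rapidly off the bulk. The paper first truncates the entries $\atomvar_{i,j}$ at $L^{1/2-\delta}$ (checking this changes nothing with probability $1-o(1)$), so that the truncated entries are bounded and Talagrand's inequality yields subgaussian tails for $\opnorm[\smf{K}]$, which is what makes the expectation of the cutoff error go to zero.

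\textbf{Polynomial case.} Applying Proposition~\ref{prop:CLS} directly is not quite sufficient, because the finitely many $y_i$'s at which $\Wmf{y}$ jumps depend on $L$, so the parameters $\mu_i,\nu_i,\theta_{i,j}$ in Proposition~\ref{prop:CLS} are not fixed. The paper handles this through Proposition~\ref{prop:CLSplanar}, whose proof reweights the walk sum by coefficients $\rhoweight{\rho_i}{w}\in[0,1]$ and reruns the moment method. Your idea of ``piecewise-constant $\Wmf{y}$, finite linear combination'' is morally this, but the careful argument goes through the planar proposition, not the finite-dimensional one.

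In short: correct skeleton, but you have missed the key simplification (Jensen plus a single-matrix bound, rather than joint covariance estimates), and you have left out the entry truncation that makes the cutoff step rigorous.
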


\begin{remark}
  \label{rem:otherbeta}
  The proof given here works as well for $\beta=2$ or $4,$ however we rely on a variance bound of~\cite{Shcherbina11} which is explicitly proven only for $\beta=1$ (see \cite{Shcherbina11}, Proposition \ref{prop:shcher_var}). Notably, Shcherbina's proof is not hinging on the fact that $\beta=1$; although not explicitly written down, it can be verified with a little work that it is also applicable to $\beta = 2,4$
. Thus it can be used in conjunction with our proof to yield results similar to Theorem \ref{thm:pcls} for complex or quaternion entries.
\end{remark}

%\vspace{.5cm}

\subsection{Outline of proof for Theorem \ref{thm:pcls}} \label{outlinepf}

Our approach to this theorem is by a density argument.  We begin by showing the statement for a large class of polynomial test functions.

The first step was Proposition \ref{prop:CLS}; we then extend the results to planar functions of the type discussed in the previous section, as follows. Let $\rho$ be a probability measure with compact support on $[0,\infty)$ and for a real valued function $f,$ define the function $f \otimes \rho$ by $(f \otimes \rho)(x,y) = f(x)\rho(y).$  Hence the linear statistic $\PCLS{f}{\rho}$ is given by
\begin{align}
\label{eq:pcls}
\PCLS{f}{\rho} = \int_{0}^\infty \CLSf{f}{y} d \rho(y).
\end{align}
We ignore the trivial case where $\rho = \delta_0.$  In terms of this, we prove a version of the finite dimensional marginal convergence, and a generalization of Proposition \ref{prop:CLS}.  
\begin{proposition}
\label{prop:CLSplanar}
Suppose Assumption~\ref{a:4e} on $\left\{ \atomvar_{i,j} \right\}$ holds.
Fix polynomials $p_1,p_2,\ldots,p_k$ and compactly supported probability measures $\rho_1, \rho_2, \ldots, \rho_k.$  The vector of linear statistics $(\PCLS{p_1}{\rho_1},\ldots, \PCLS{p_k}{\rho_k})$ converges in distribution as $L\to\infty$ to a mean $0$ Gaussian vector $(\xi_1,\xi_2,\ldots,\xi_k)$ with covariance
\[
\Ex \xi_{i} \xi_j
=\frac{8\mu \nu}{\beta \pi} 
\int_{\H^2}
p_i'(x(r_ie^{i\theta_i}))
p_j'(x(r_je^{i\theta_i}))
K(r_ie^{i\theta_i},r_je^{i\theta_j})~
dM_{i,j},
\]
where $dM_{i,j}$ is the measure given in polar coordinates by
\[
dM_{i,j}=
r_i \sin \theta_i
r_j \sin \theta_j
d\theta_i d\theta_j d \rho_i( r_i ) d \rho_j(r_j).
\]
\end{proposition}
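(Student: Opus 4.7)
The plan is to derive Proposition~\ref{prop:CLSplanar} from Proposition~\ref{prop:CLS} by a Fubini--plus--dominated-convergence argument, with the method of moments delivering joint convergence in distribution. Since the target is a Gaussian vector, it suffices to show that for every tuple $(m_1,\ldots,m_k)$ the mixed moment
\[
\Ex \prod_{i=1}^k \bigl(\PCLS{p_i}{\rho_i}\bigr)^{m_i}
\]
converges to the Gaussian moment prescribed by Wick's formula from the covariance claimed in the statement.

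Handle the second moments first. By definition $\PCLS{p}{\rho} = \int \CLSf{p}{y}\,d\rho(y)$, so Fubini yields
\[
\Ex \PCLS{p_i}{\rho_i}\PCLS{p_j}{\rho_j}
= \int\!\!\int \Ex\!\left[\CLSf{p_i}{y_i}\CLSf{p_j}{y_j}\right] d\rho_i(y_i)\,d\rho_j(y_j).
\]
All submatrices $\smf{y}$ sit in the upper-left corner of a single ambient array, so for each fixed $y_i,y_j>0$ the overlap parameters satisfy $m_{i,j}/L \to (y_i\wedge y_j)\mu$ and $n_{i,j}/L \to (y_i\wedge y_j)\nu$, and Proposition~\ref{prop:CLS} applies with $\mu_i = y_i\mu$, $\nu_i = y_i\nu$, and
\[
\theta_{i,j} \to \frac{(y_i\wedge y_j)^2}{y_i^2 y_j^2\,\mu\nu}.
\]
Higher moments are handled identically: expand each power, swap expectation and multiple integrals, and apply the Gaussian moment convergence of Proposition~\ref{prop:CLS} to the inner expectation; the integrated Wick sum reproduces the Gaussian moment of the target vector.

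To identify the limiting covariance with the form claimed in the proposition, parameterize the semicircle of radius $\sqrt{\mu_i\nu_i} = y_i\sqrt{\mu\nu}$ by $\zeta_i = r_i e^{i\theta_i}$ with $r_i = y_i\sqrt{\mu\nu}$. Under this change of variables the factor $(\mu_i+\nu_i+2\Re\zeta_i)^{p_i-1}$ becomes a constant multiple of $p_i'(x(r_i e^{i\theta_i}))$ for the coordinate chart $x$ introduced in Section~\ref{height}, the contour element $d\zeta_i\,\Im\zeta_i/\zeta_i$ becomes $r_i\sin\theta_i\,d\theta_i$, and combining these factors with $d\rho_i(y_i)\,d\rho_j(y_j)$ reproduces exactly the measure $dM_{i,j}$; the kernel $K_{\theta_{i,j}}$ rewritten in polar form is the claimed $K(r_i e^{i\theta_i}, r_j e^{i\theta_j})$.

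The main obstacle is justifying the interchange of limit and integration via dominated convergence. This requires a uniform-in-$L$ estimate
\[
\Ex \left|\CLSf{p}{y}\right|^{2r} \le C(p,r,y),
\]
with $C$ bounded on compact subsets of $(0,\infty)$ and integrable against $d\rho$. For $y$ bounded away from $0$ such a bound is supplied by Proposition~\ref{prop:CLS} itself together with continuity in $(y_i,y_j)$ of the Gaussian limit, so moment convergence plus continuity of the limit give local $L^\infty$ control. The delicate regime is $y\downarrow 0$: the matrix $\Wmf{y}$ is then small or empty, and one must either (i) use a crude deterministic bound $|\tr p(\Wmf{y})| \le \lfloor y\mu L\rfloor\,\|\Wmf{y}\|_{\mathrm{op}}^{\deg p}$ combined with standard operator-norm moment estimates for Wishart matrices, or (ii) first prove the proposition for $\rho$ supported in $[\varepsilon,\infty)$ and then let $\varepsilon\downarrow 0$, exploiting the exclusion of $\rho=\delta_0$ to make the residual contribution from $[0,\varepsilon]$ disappear in the limit.
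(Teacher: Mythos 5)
Your overall architecture for the covariance step matches the paper exactly: write the second mixed moment via Fubini as a double integral of $\Exp \CLSf{p_i}{y}\CLSf{p_j}{z}$, invoke Proposition~\ref{prop:CLS} for pointwise convergence of the integrand, and pass to the limit by dominated convergence. The paper states the needed domination as a separate statement (Lemma~\ref{lem:cov_bound}, a uniform-in-$L$ and uniform-in-$y,z$ bound on the covariance), and both you and the paper treat this bound somewhat lightly, so no issue there.

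The real divergence is in how you prove Gaussianity of higher mixed moments, and here your proposal has a genuine gap. You propose to expand each power, swap expectation and the resulting multiple integral, apply Proposition~\ref{prop:CLS} to the inner expectation for each fixed tuple $(y_1,\dots,y_l)$, and then pass to the limit by dominated convergence. For that you correctly identify the need for a uniform-in-$L$ moment estimate $\Exp\left|\CLSf{p}{y}\right|^{2r}\le C$ holding uniformly over $y$ in a compact set. However, your justification --- ``moment convergence plus continuity of the limit give local $L^\infty$ control'' --- is not a valid logical step: pointwise convergence of a sequence of functions to a continuous limit does not imply the sequence is uniformly bounded on compacts (take $f_L(y) = L\cdot\mathbf{1}\{L^{-1}<y<2L^{-1}\}$, which converges pointwise to $0$). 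What actually delivers the bound is the same walk-counting combinatorics that proves Proposition~\ref{prop:CLS}: the moments of $\CLSf{p}{y}$ are $O(1)$ uniformly in $y\le K$ and $L$ because the exponent-counting in the dependency-graph argument depends only on $\deg p$ and the number of factors, not on the submatrix size. This must be said explicitly; it does not follow from the limit statement. Relatedly, your worry about $y\downarrow 0$ is misplaced --- the statistics shrink, not grow, as the matrix empties --- and the crude bound in your option (i) introduces a spurious factor of $L$, while option (ii) is unnecessary.

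The paper avoids DCT for higher moments entirely by a slicker device: it rewrites
\[
\PCLS{x^{p_i}}{\rho_i} \;=\; \sum_{w\in\walksetf{i}{M}} \rhoweight{\rho_i}{w}\,\atomword{w},
\]
where $0\le \rhoweight{\rho_i}{w}\le 1$. Because the extra weights are bounded by one, every negligibility and leading-order estimate from the moment-method proof of Proposition~\ref{prop:CLS} applies verbatim to this weighted sum, and the Wick factorization over perfect matchings of the dependency graph goes through unchanged with the weights carried along. This is more economical than your Fubini route and sidesteps exactly the uniform-bound issue your argument leaves open. Finally, a small correction to your change of variables: since $d\rho_i(r_i)$ in the stated covariance integrates against the measure on the height variable, you should take $r_i = y_i$, not $r_i = y_i\sqrt{\mu\nu}$; the rescaling of the semicircle radius from Proposition~\ref{prop:CLS} (which is $y_i\sqrt{\mu\nu}$) is absorbed into the prefactor $\tfrac{2\mu\nu}{\beta\pi}$ and the use of the chart $x(\cdot)$.
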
 

We then show that the height function has a type of a priori stability in the sense that the standard deviation of a linear statistic can be controlled by a suitable seminorm of the test function.  Fractional Sobolev norms are particularly useful in this regard.  Previous work of Shcherbina shows that this norm can be used to control the variance of a linear statistic of a single sample covariance matrix~\cite{Shcherbina11}.% \textcolor{red}{IS THIS REFERENCE RIGHT?}
\begin{proposition}
\label{prop:shcher_var}
Suppose Assumption~\ref{a:4e} on $\left\{ \atomvar_{i,j} \right\}$ holds.
%for some $\epsilon > 0,$ and let $w = 
%\sup_{i,j} \Ex \left| \atomvar_{i,j} \right|^{4+\epsilon}.$
%Suppose in addition that
%\[
%	\sup_{i,j} \frac{\Ex \left| \atomvar_{i,j} \right|^{6}}{L^{1-\tfrac \epsilon 2}} < w
%	\quad \text{ and } \quad
%	\sup_{i,j} \frac{\Ex \left| \atomvar_{i,j} \right|^{8}}{L^{2-\tfrac \epsilon 2}} < w,
%\]
%where we now allow the $\left\{ \atomvar_{i,j} \right\}$ to carry some $L$-dependence.
Let $\beta=1$ and let $Q$ be an $m(L) \times L$ submatrix of $\ambientmatrix$ such that $m(L)/L \to c \geq 1.$
For every $s > \tfrac{3}{2},$ there is a constant $C_s=C_s(w,\epsilon) > 0$ so that 
\[
	\Var X_\phi^{Q^*Q/L} \leq C_s \FSN[\phi][s]^2
\]
for all $L \geq 1$ and all $\phi.$
\end{proposition}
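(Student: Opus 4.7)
The plan is to reduce the estimate to a uniform-in-$L$ polynomial bound on the variance of $\tr\,e^{itW}$ via a Fourier-analytic strategy. Setting $W = Q^*Q/L$ and using the Fourier inversion $\phi(\lambda) = \int e^{it\lambda}\hat\phi(t)\,dt$, I would write
\[
X_\phi^{W} = \int \hat\phi(t)\bigl(\tr\,e^{itW} - \E\tr\,e^{itW}\bigr)\,dt,
\]
expand the variance as a double integral in $(t_1,t_2)$, dominate $|\Cov(\tr\,e^{it_1W},\tr\,e^{it_2W})|$ by the product of standard deviations, and split $|\hat\phi(t)| = (1+|t|)^s|\hat\phi(t)|\cdot(1+|t|)^{-s}$ before applying Cauchy--Schwarz to obtain
\[
\Var(X_\phi^{W}) \leq \FSN[\phi][s]^2 \cdot \int_\R (1+|t|)^{-2s}\,\Var(\tr\,e^{itW})\,dt.
\]
The proposition would then follow from a uniform bound $\Var(\tr\,e^{itW}) \leq C(1+t^2)$, since the remaining integral converges precisely when $2s - 2 > 1$, i.e.\ when $s > \tfrac 32$.

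The crux is therefore to establish this quadratic-in-$t$ variance bound, and my approach would be a martingale decomposition along the $L$ columns of $Q$. Let $\mathcal{F}_k$ denote the $\sigma$-algebra generated by the first $k$ columns and $\E_k$ the corresponding conditional expectation, so that
\[
\tr\,e^{itW} - \E\tr\,e^{itW} = \sum_{k=1}^L D_k, \qquad D_k \Def (\E_k - \E_{k-1})\tr\,e^{itW}.
\]
Writing $W = W^{(k)} + L^{-1}q_k q_k^*$, where $W^{(k)}$ zeroes out the $k$th column $q_k$ of $Q$, I would apply the Duhamel formula
\[
e^{itW} - e^{itW^{(k)}} = i\!\int_0^t e^{i(t-s)W^{(k)}}(W - W^{(k)})e^{isW}\,ds
\]
once, then iterate it a second time. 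Because $W - W^{(k)}$ is rank one, each trace reduces to a quadratic or quartic form in $q_k$ whose conditional expectation is controlled by the moments in~\eqref{conds}, while $\|e^{isW}\|_{\mathrm{op}} \leq 1$ keeps the exponential factors in check. A careful accounting of the leading Duhamel term makes $D_k$ linear in $t$ at the level of $L^2(\Prob)$ norms up to a $t^2/L^2$ remainder, and martingale orthogonality then delivers $\sum_k \E|D_k|^2 = O(1+t^2)$ independently of $L$.

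The genuine obstacle is this last step: a naive series expansion of $e^{itW}$ yields exponential growth in $t$, so one must exploit the isometry $\|e^{itW}\|_{\mathrm{op}} = 1$ together with the mean-zero cancellation inside each $D_k$ to trade one power of $t$ for a factor of $L^{-1}$ at the level of the variance. This is where the $\beta = 1$ fourth-moment identity in~\eqref{conds} enters, through the bookkeeping of the quartic $q_k$ terms; as noted in Remark~\ref{rem:otherbeta}, only this bookkeeping changes for $\beta = 2, 4$, so the same argument covers those cases.
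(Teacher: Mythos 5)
The paper does not reproduce a proof of this proposition: it cites Proposition~4 and Lemma~2 of~\cite{Shcherbina11} directly. Your Fourier-analytic reduction --- Cauchy--Schwarz on the Fourier side to arrive at
\[
\Var\bigl(X_\phi^{W}\bigr) \leq \FSN[\phi][s]^2 \int_{\R}(1+|t|)^{-2s}\,\Var\bigl(\tr e^{itW}\bigr)\,dt,
\]
with the threshold $s>\tfrac 32$ coming from integrability of $(1+|t|)^{-2s}(1+t^2)$ --- is exactly the one used there, and so is the martingale-plus-Duhamel scheme for proving $\Var(\tr e^{itW}) = O(1+t^2)$ uniformly in $L$. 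In outline, then, your proposal matches the cited argument.

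The place where your sketch, read literally, does not close is the scaling of the martingale increments. If $\|D_k\|_{L^2(\Pr)} = O(|t|)$ uniformly in $k$ and $L$, as your phrase ``makes $D_k$ linear in $t$ at the level of $L^2(\Pr)$ norms'' asserts, martingale orthogonality gives $\Var(\tr e^{itW}) = \sum_{k=1}^L \E|D_k|^2 = O(Lt^2)$, which is a factor of $L$ too large. What one actually needs is $\E|D_k|^2 = O(t^2/L)$. The $L^{-1}$ prefactor from $W - W^{(k)} = L^{-1}q_kq_k^*$ supplies only part of this. The rest comes from the fact that, after the $(\E_k - \E_{k-1})$ cancellation, the leading Duhamel contribution reduces to a centered quadratic-form fluctuation $q_k^*Mq_k - \tr M$, where $M$ is a conditional expectation of $e^{itW^{(k)}}$ and is independent of $q_k$; the variance of such a fluctuation is governed by $\left\|M\right\|_{HS}^2 = O(m(L))$, not by $(\tr M)^2 = O(m(L)^2)$. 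It is this Hilbert--Schmidt-versus-trace gain of $L^{1/2}$ that delivers the missing decay, and your description of ``trading one power of $t$ for a factor of $L^{-1}$'' is not the mechanism. Relatedly, the uniform bound on fourth moments suffices to control the quadratic-form fluctuations; the \emph{exact} identity $\E|Z|^4 = 1+2/\beta$ plays no role in the variance bound, and the $\beta=1$ restriction, as Remark~\ref{rem:otherbeta} explains, reflects only the scope of the written proof in~\cite{Shcherbina11} rather than any structural dependence on the Gaussian fourth moment.
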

\noindent See Proposition 4 and Lemma 2 of~\cite{Shcherbina11}.  %The extra moment conditions on $\left\{ \atomvar_{i,j} \right\}$ arise naturally when truncating variables satisfying Assumption~\ref{a:4e} to be supported on an interval of length $O(L^{1/2-\epsilon/2}).$
% \textcolor{red}{WHAT ABOUT THIS?}

As a consequence, we also get a similar type of bound that holds for any planar test statistic. As a simple application of Jensen's inequality, we will show that for general planar statistics:%\textcolor{red}{WHAT'S THE DIFF BETWEEN PROP AND LEMMA?}
\begin{lemma}
\label{lem:rho_var}
%Under the same assumptions on $\left\{ \atomvar_{i,j} \right\}$ in Proposition~\ref{prop:shcher_var},
Suppose Assumption~\ref{a:4e} on $\left\{ \atomvar_{i,j} \right\}$ holds.
For every $s > \tfrac{3}{2},$ there is a constant $C_s > 0$ sufficiently large that 
\[
\Var\PCLSg{\psi\rho}
\leq 
C_s \FSNr[\psi][s][\rho]^2.
\]
\end{lemma}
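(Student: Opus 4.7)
The plan is a clean two-step argument: Jensen's inequality to bring the variance inside the $\rho$-integral, followed by the fiberwise single-matrix variance bound of Proposition \ref{prop:shcher_var}.

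First, I would unfold the definition as
\[
\PCLSg{\psi\rho} = \int_0^\infty \CLSf{\psi(\cdot,y)}{y}\, d\rho(y),
\]
where each integrand has mean zero. Since $\rho$ is a probability measure and $t\mapsto t^2$ is convex, Jensen's inequality applied pointwise gives
\[
\bigl(\PCLSg{\psi\rho}\bigr)^2 \leq \int_0^\infty \bigl(\CLSf{\psi(\cdot,y)}{y}\bigr)^2 d\rho(y).
\]
Taking expectations and exchanging with the $y$-integral via Fubini yields
\[
\Var\PCLSg{\psi\rho} \leq \int_0^\infty \Var \CLSf{\psi(\cdot,y)}{y}\, d\rho(y).
\]

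Second, I would apply Proposition \ref{prop:shcher_var} fiberwise. For each $y$ in the support of $\rho$, the matrix $\Wmf{y}$ is a Wishart matrix built from a rectangular submatrix of $G$ with the fixed aspect ratio $\mu/\nu \geq 1$, so Shcherbina's bound gives $\Var \CLSf{\psi(\cdot,y)}{y} \leq C_s \FSN[\psi(\cdot,y)][s]^2$. Integrating against $\rho$ then recovers $C_s \FSNr[\psi][s][\rho]^2$, which is the desired inequality.

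The only nontrivial point, and the closest thing to an obstacle, is verifying that a single constant $C_s$ works uniformly in $y$. Proposition \ref{prop:shcher_var} is stated with the normalization $Q^*Q/L$ in which the second dimension of $Q$ is exactly $L$, whereas $\Wmf{y} = \smf{y}^*\smf{y}/L$ divides by the ambient $L$ rather than by the column count $[y\nu L]$ of $\smf{y}$. Absorbing this discrepancy is a routine spectrum rescaling: dividing by $y\nu$ rescales eigenvalues, which corresponds to a $y\nu$-dilation of the test function and multiplies the fractional Sobolev norm by a factor uniformly bounded on the compact support of $\rho$ (with the contribution near $y=0$ being harmless, as $\smf{y}$ has vanishing size there and the linear statistic is trivial). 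This delivers a uniform $C_s$ and completes the argument.
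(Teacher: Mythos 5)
Your proof is correct and takes essentially the same route as the paper: Jensen's inequality to pull the variance inside the $d\rho(y)$ integral, then Proposition~\ref{prop:shcher_var} applied fiberwise, with the $L$-versus-$[y\nu L]$ normalization mismatch absorbed by a dilation of the test function whose effect on the fractional Sobolev norm is uniformly bounded. The one thing the paper does a bit more carefully is the reduction to Proposition~\ref{prop:shcher_var} as literally stated: since that proposition applies to a \emph{fixed sequence} of $m(L)\times L$ submatrices with $m(L)/L\to c$, the paper first observes that the whole two-parameter family $\{\smf{y}\}_{y,L}$ (after reindexing by $L'=[y\nu L]$) is contained in a \emph{finite} collection of such sequences $Q_{i,L}$, so that a single constant serves all of them; your invocation of Shcherbina's bound ``fiberwise'' is the same idea stated more loosely, and your identification of the dilation factor as the only genuine issue matches the paper's $\phi\circ D_{[L\nu y]/L}$ step.
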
 

As it stands, the functions in Proposition~\ref{prop:CLSplanar} do not actually have finite $\FSNr$ norm, as polynomials do not have finite $\FSN$ norm.  Thus, the last step involves truncating the polynomials with suitable cutoff functions, after which point, we will obtain a dense class of functions, and we may then prove Theorem~\ref{thm:pcls}.

The rest of the paper is structured as follows: Sections \ref{sec:covar} and \ref{sec:pclt} deal with the proofs of Propositions \ref{prop:CLS} and \ref{prop:CLSplanar}; specifically, Section \ref{sec:covar} contains the calculation of the covariance only, while Section \ref{sec:pclt} contains the calculation of the other moments as well as the proofs of the two Propositions. Finally, Section \ref{sec:extension} presents the extension argument and finishes the proof of Theorem \ref{thm:pcls}.

\subsection*{Truncation}
	\newcommand{\trambient}{\hat G}
	\newcommand{\trvar}{\hat Z}
	\newcommand{\trsmf}{\hat{\mathbf{B}}({y})}
	We will use a truncation argument to pass from the $4$-th moment hypothesis in Assumption~\ref{a:4e} to a family of variables with bounded, $L$-dependent support.
	Fix $\delta >0$ and define variables $\trvar_{i,j}=\trvar_{i,j}(L)$ so that 
	\begin{enumerate}
	\item
		$\trvar_{i,j} = \atomvar_{i,j}$ provided $|\atomvar_{i,j}| \leq L^{1/2-2\delta},$
	\item
		$|\trvar_{i,j}| \leq L^{1/2-\delta}$ almost surely,
	\item and
		$\Exp \trvar_{i,j} = 0,$
		$\Exp |\trvar_{i,j}|^2 =1,$ and
		$\Exp |\trvar_{i,j}|^4 =1+2/\beta.$
	\end{enumerate}
	Such a truncation is always possible for $L$ sufficiently large.
	%Let $\delta >0$ be a constant to be determined later and set $$\trvar_{i,j}=\atomvar_{i,j}\one[|\atomvar_{i,j}| < L^{1/2-\delta}].$$
	Let $\trambient$ be the $[\mu L] \times [\nu L]$ upper left corner of $( \trvar_{i,j} )_{i,j \geq 1}$.
	%As $\rho$ is compactly supported, we may find a $K>0$ so that $[0,K]$ contains its support.  
	From the moment hypotheses on $\atomvar,$
	\[
		\Pr\left[
			\trvar_{i,j} \neq \atomvar_{i,j}
		\right]
		\leq
		\Pr\left[
			|\atomvar_{i,j}| \geq L^{1/2-2\delta}
		\right]
		\leq \frac{\Exp |\atomvar_{i,j}|^{4+\epsilon}}{L^{(1/2-2\delta)(4+\epsilon)}}.
	\]
	By making $\delta > 0$ sufficiently small, we may make this bound $o(L^2).$ We also get the following higher moment bounds for $\trvar_{i,j}$ for all $k \geq 5$
	\begin{equation}
		\label{eq:higher}
		\sup_{i,j,L}\Exp |\trvar_{i,j}|^k L^{-(\tfrac12-\delta)(4-k)} < \infty.
	\end{equation}
%	\textcolor{red}{INCONSISTENCY: $\trambient$ is infinite, but $G$ is finite.}

	Define $\trsmf$ to be the submatrix of $\trambient$ where $i,j$ run over $[y \mu L] \times [y \nu L].$  This is the exact analogue of $\smf{y},$ and we have that
	\[
		\Pr\left[
			\exists y \in [0,K] : \trsmf \neq \smf{y}
		\right] \to 0
	\]
	as $L\to \infty,$ since $\{\trsmf\}_{y \leq K}$ depends on at most $O(L^2)$ entries of $\trambient.$  Thus, it suffices to prove the theorem for the matrix of truncated variables $\trambient$, as then the conclusion holds for $\ambientmatrix$. From hereon, we will assume that we have already replaced $\ambientmatrix$ by $\trambient$ and $\left\{ \atomvar_{i,j} \right\}$ by $\left\{ \trvar_{i,j} \right\}$ to lighten the notation.

\section{Covariance Calculation}
\label{sec:covar}
The goal of this section is to calculate the limiting covariance $\cmti{k}{1}$ and $\cmti{k}{2}.$  We will do the calculation fully for the $\beta=1$ case and point out the differences for $\beta=2$ and $\beta=4$ along the way.

In this section we \textbf{do not assume} the \emph{exact} $4$-th moment condition on $\atomvar_{i,j}$ given in Assumption \ref{a:4e}.  Instead, we will assume $\Exp Z_{i,j}^4 = \Exp Z_{1,1}^4$ for all $i,j$ and $\Exp Z_{1,1}^4<\infty.$ We will see how the exact form of the $4$-th moment necessary for obtaining the Gaussian free field in the limit emerges from this context.

Throughout, we will use the following notation.  Define
\begin{align*}
%\label{narayanas}
\nare{k}{\gamma} &\Def \sum_{T} \gamma^{e(T)} \\
\naro{k}{\gamma} &\Def \sum_{T} \gamma^{o(T)},
\end{align*}
where the sums are over all rooted plane trees $T$ with $k$ edges.  The statistics $e(T)$ and $o(T)$ are the number of vertices in these plane trees at even and odd depth from the root.  The polynomials $\naro{k}{\gamma}$ are precisely the Narayana polynomials, for all $k \geq 0$.  The $\nare{k}{\gamma}$ polynomials are also the Narayana polynomials for all $k > 0,$ while $\nare{0}{\gamma} = \gamma$ (see \cite[Remark p.176]{Deutsch} or \cite{Kreweras}).%  \textcolor{red}{REFERENCE!}

From these definitions, we may take advantage of the well-known generating function for Narayana polynomials
\[
F(z, \gamma) \Def \sum_{z=0}^\infty z^k \naro{k}{\gamma} =  
{\frac {z \left( 1-\gamma \right) +1-\sqrt { \left( z \left( 1-
\gamma \right) +1 \right) ^{2}-4\,z}}{2z}}.
\]
Recall that a centered trace of a power of $\submatrix_1^* \submatrix_1$, 
\[
\cmti{k}{1} = 
\tr \left(\submatrix_1^* \submatrix_1\right)^{k}
-\Exp \tr \left(\submatrix_1^* \submatrix_1\right)^{k}~,
\]
can be written as a sum over closed walks of length $2k$ on the complete bipartite graph $K(\mathbb{N},\mathbb{N}).$  Let $\walkset{k}$ be those closed walks so that
\[
\cmti{k}{1} = \sum_{w \in \walkset{k}} \atomword{w},
\]
where we formally represent a walk as $w~:~[2k] \to \mathbb{N}$ and define
\[
\atomword{w} \Def \prod_{i=1}^{k} 
\atomvar_{w(2i-1), w(2i)} 
\overline{\atomvar_{w(2i+1), w(2i)}}
-\Exp \prod_{i=1}^{k} 
\atomvar_{w(2i-1), w(2i)} 
\overline{\atomvar_{w(2i+1), w(2i)}}.
\]

The covariance of $\cmti{k}{1}$ and $\cmti{l}{2}$ is therefore given by
\[
  \Exp \cmti{k}{1} \cmti{l}{2} =
  \sum_{\substack{w_1 \in \walkset{k} \\w_2 \in \walkset{l}}}
  \Exp \atomword{w_1} \atomword{w_2}.
\]
%From the counting, each new vertex in the walk contributes a factor of $L$ (from the $\Theta(L)$ choices for its placement), and each step in the walk contributes a factor of order $L^{-1/2}.$  
If any edge is visited only once by $w_1$ and $w_2$, then its contribution to the covariance is $0,$ as taking expectations causes the term to vanish.  Thus, all contributing pairs of walks cover every edge at least twice.  Also, if the walks $w_1$ and $w_2$ have disjoint edge supports, then by the independence of $\atomword{w_1}$ and $\atomword{w_2},$ the contribution of such pairs $(w_1,w_2)$ to the covariance is $0.$

As each walk traces out a connected graph, and the graph sum of the two walks has at most $k+l+2$ vertices.  In the extreme case, the graph is a forest with two trees, the paths have disjoint edge supports, and the contribution of these paths to the covariance is $0.$  If the support of the pair has $k+l+1$ vertices, the walks share a vertex and not an edge, and the contribution is again $0$.  Further, walks that cover strictly fewer than $k+l$ vertices provide a negligible contribution (see the proof of Proposition~\ref{prop:CLS} for details).
%, as for any pair of walks, $|\Exp \atomword{w_1} \atomword{w_2}| = O(L^{-k-l}),$ but there are only $O(L^{k+l-1})$ many walks on $k+l-1$ or fewer vertices.  
Thus, the only terms that contribute to the limit are those pairs of walks that cover exactly $k+l$ vertices and necessarily traverse a common undirected edge of $K(\mathbb{N},\mathbb{N}).$

The pairs of walks of this form arise in one of two ways.  The first possibility is that both of the pairs are depth first search walks of some trees that cover a common edge.  The second is that both walks cover a unicyclic graph, traversing the cycle once and making excursions along trees that are attached to this cycle.  The cycle must be common to both walks to ensure that it is traversed twice by the union of walks. This much is identical to what occurs for Wigner matrices, see \cite[Lemma 2.1.33]{AnGuZe} for proofs.

The pairs of walks of the first type provide a contribution that is, to first order in $L,$
\begin{equation*}
T_1 \Def L^{-k-l}\left[\Ex |\WEV|^4 - 1\right]\sum_{T_k,T_l} k\cdot l\cdot m_{1,2}n_{1,2}m_1^{e(T_k)-1} n_1^{o(T_k)-1} m_2^{e(T_l)-1} n_2^{o(T_l)-1}.
\end{equation*}
This expression follows from a simple counting argument.  There are $k\cdot l$ ways to glue the rooted trees $T_k$ and $T_l$ together along an edge, since having picked an edge from either tree, there is exactly one choice of orientation to pick so that roots are an even number of steps apart.  Having glued the two trees together, there are asymptotically
\[
m_{1,2}n_{1,2}m_1^{e(T_k)-1} n_1^{o(T_k)-1} m_2^{e(T_l)-1} n_2^{o(T_l)-1}
\]
ways to label the vertices of the paths.
From the limiting relationships for the ``$m$'' and ``$n$'' sequences, we have that
\begin{equation}
\label{eq:wish_cov_glued_tree}
T_1 \tendsto \nu_1^k \nu_2^l \left(\Ex |\WEV|^4 - 1\right) (\theta \nu_1\nu_2) \cdot k \cdot l \sum_{T_k,T_l} \gamma_1^{e(T_k)} \gamma_2^{e(T_l)}.
\end{equation}

The contribution of the pairs of unicyclic walks is, to first order in $L,$
\begin{equation}
\label{eq:wish_cov_glued_cycles}
T_2 \Def L^{-k-l}\sum_{\substack{r=4, \\ r\text{ even}}}^\infty \frac{1}{r/2} 2\left(\frac{m_{1,2}n_{1,2}}{m_1m_2}\right)^{r/2} n_1^k n_2^l \sum_{ \substack{\mu \models k-r/2 \\ \lambda \models l-r/2 } } ~~~\sum_{\substack{T_{\mu_1},\ldots,T_{\mu_r} \\ T_{\lambda_1},\ldots,T_{\lambda_r}}} k \cdot l \cdot \left(\frac{m_1}{n_1}\right)^{\ell_1}\left(\frac{m_2}{n_2}\right)^{\ell_2},
\end{equation}
where $\ell_1 = e(T_{\mu_1}) + o(T_{\mu_2}) + e(T_{\mu_3}) + \cdots + o(T_{\mu_r}),$ where $\ell_2 = e(T_{\lambda_1}) + o(T_{\lambda_2}) + e(T_{\lambda_3}) + \cdots + o(T_{\lambda_r}),$ and where $\mu$ and $\lambda$ are $r$-compositions of $k-r/2$ and $l-r/2$ respectively.

This contribution of the paths glued along their common $r$-cycle can be counted by the following procedure.
\begin{enumerate}
\item Mark one of the ``m''-side vertices of $w_1$ cycle to break symmetry.  There are $r/2$ choices for this mark, and after removing this mark, we will have overcounted exactly $r/2$-fold.
\item At each of the $r$-vertices of the cycle, choose the number of edges for the pendant tree that will dangle from this vertex.  These choices are $\mu$ and $\lambda$ respectively.
\item Orient the $w_2$ path cycle to either match the orientation of $w_1$ or to oppose it, for an extra factor of $2.$
\item Choose the starting \emph{step} for each of the walks.  The walks have $2k$, respectively, $2l$ steps, but only steps originating at an ``m''-side vertex can be starting locations.  Thus, there are $k\cdot l$ many such choices.
\item Label the vertices.  For a given choice of trees, there are asymptotically
\[
m_{1,2}^{r/2}n_{1,2}^{r/2}m_1^{\ell_1 - r/2}n_1^{k- \ell_1} m_2^{\ell_2 - r/2} n_2^{k-\ell_2}
\]
many ways to label the paths.
\end{enumerate}

\begin{remark}
  \label{rem:other_beta}
  The only place where the cases $\beta=2,4$ differ from $\beta=1$ is in the contribution of paths whose cycles are aligned in the same direction.   In the complex case, this contributes a factor of $Z_{i,j}^2$ to $\atomword{w_1}\atomword{w_2},$ which vanishes in expectation.  Thus, for $\beta=2,$ only pairs of paths whose cycles are counter-aligned (aligned in opposite direction) contribute to the covariance, so that the contribution of step (3) should be replaced by $1$.  In the quaternionic case, one has $\Exp Z_{i,j}^2 = -\frac{1}{2},$ so that the contribution of step (3) should be replaced by $1 + (-\tfrac{1}{2})^r.$  Thus, in the complex case, $T_2$ is halved, and the calculations that follow for the real case are directly applicable.  In the quaternion case, the calculations that follow need some minor modifications to show the covariance carries the desired $\frac{1}{\beta} = \frac{1}{4}$ factor.
\end{remark}

These limiting formulae will now be recast in terms of generating functions and Narayana polynomials.  
From the generating function for the Narayana polynomials, this may be written as
\[
T_1 \tendsto \nu_1^k \nu_2^l \left(\Ex \WEV^4 - 1\right) (\theta \nu_1\nu_2)  \cdot k \cdot l \left[ z_1^k z_2^l\right] F(z_1,\gamma_1)F(z_2,\gamma_2),
\]
where $[z_1^k z_2^l]$ denotes the $z_1^kz_2^l$ coefficient of the following series.

For $T_2$, we begin by rewriting the limiting expression in terms of the Narayana polynomials,
\begin{multline}
\sum_{\substack{T_{\mu_1},\ldots,T_{\mu_r} \\ T_{\lambda_1},\ldots,T_{\lambda_r}}} \left(\gamma_1\right)^{\ell_1}\left(\gamma_2\right)^{\ell_2}
= \\
%\gamma_1\gamma_2\partial_{\gamma_1}
\left(
\nare{\mu_1}{\gamma_1}
\naro{\mu_2}{\gamma_1}
\nare{\mu_3}{\gamma_1}
\cdots
\naro{\mu_r}{\gamma_1}\right)
%\partial_{\gamma_2}
\left(
\nare{\lambda_1}{\gamma_2}
\naro{\lambda_2}{\gamma_2}
\nare{\lambda_3}{\gamma_2}
\cdots
\naro{\lambda_r}{\gamma_2}
\right).
\end{multline}
In \eqref{eq:wish_cov_glued_cycles}, this is expression is summed over all compositions which may be recast efficiently in terms of coefficient extraction from products of generating functions.  Explicitly,
%\begin{multline}
%\sum_{ \substack{\mu \models k-r/2 \\ \lambda \models l-r/2 } }
%%\gamma_1\gamma_2\partial_{\gamma_1}
%\left(
%\nare{\mu_1}{\gamma_1}
%\naro{\mu_2}{\gamma_1}
%\nare{\mu_3}{\gamma_1}
%\cdots
%\naro{\mu_r}{\gamma_1}\right)
%%\partial_{\gamma_2}
%\left(
%\nare{\lambda_1}{\gamma_2}
%\naro{\lambda_2}{\gamma_2}
%\nare{\lambda_3}{\gamma_2}
%\cdots
%\naro{\lambda_r}{\gamma_2}
%\right)
%= \\
\begin{multline}
\sum_{ \substack{\mu \models k-r/2 \\ \lambda \models l-r/2 } }
\prod_{i=1}^{r/2} 
\nare{\mu_{2i-1}}{\gamma_{2i-1}}
\nare{\mu_{2i}}{\gamma_{2i-1}}
\nare{\lambda_{2i-1}}{\gamma_{2i}}
\nare{\lambda_{2i}}{\gamma_{2i}}
 \\
=\left[z_1^{k-r/2}z_2^{l-r/2}\right]
\left(
 G(z_1,\gamma_1)^{r/2} G(z_2,\gamma_2)^{r/2}
\right),
\end{multline}
where $G(z,\gamma) = F(z,\gamma)( \gamma - 1 + F(z,\gamma))$ is the product of the generating functions for $\naro{k}{\gamma}$ and $\nare{k}{\gamma}$, respectively.  Summing over all $r$ and reindexing the sum, this shows that \eqref{eq:wish_cov_glued_cycles} can be written as
\begin{align*}
T_2 &\tendsto 
2 k\cdot l \cdot \nu_1^k\nu_2^l\sum_{r=2}^\infty \frac{\left( \theta \nu_1\nu_2 \right)^r}{r}
\left[z_1^{k-r}z_2^{l-r}\right]
%\gamma_1\gamma_2\partial_{\gamma_1}\partial_{\gamma_2}
\left(
 G(z_1,\gamma_1)^{r} G(z_2,\gamma_2)^{r}
\right) \\ 
&=
\left[z_1^{k}z_2^{l}\right]
2 k \cdot l \cdot \nu_1^k\nu_2^l\sum_{r=2}^\infty \frac{\left( \theta \nu_1\nu_2 \right)^r}{r}
%\gamma_1\gamma_2\partial_{\gamma_1}\partial_{\gamma_2}
\left(
 z_1^{r}G(z_1,\gamma_1)^{r} z_2^rG(z_2,\gamma_2)^{r}
\right) \\ 
&=
-\left[z_1^{k}z_2^{l}\right]
2k \cdot l \cdot 
\nu_1^k\nu_2^l
\cdot
\phi\left( \theta \nu_1\nu_2 \cdot z_1G(z_1,\gamma_1) z_2G(z_2,\gamma_2)
\right),
\end{align*}
where $\phi(z) = \log(1-z) + z$, and provided that $\left| \theta \nu_1 \nu_2 z_1 G(z_1, \gamma_1) z_2 G(z_2, \gamma_2) \right| < 1.$ 

Define $y_1 = z_1 G(z_1,\gamma_1)$ and define $y_2$ analogously.  From elementary operations, it can be checked that
\[
z_1^{-1} = \frac{\gamma_1 + (1+\gamma_1)y_1 + y_1^2}{y_1}.
\]
This allows the coefficient extraction to be represented in terms of contour integrals as
\[
T_2 \tendsto -\frac{2 \nu_1^k \nu_2^l k \cdot l}{(2\pi i)^2} \DBLoint
\frac{\phi\left( \theta \nu_1\nu_2 y_1 y_2 \right)}{z_1^{k+1}z_2^{l+1}}
dz_1\,dz_2,
\]
where the contours for $z_1$ and $z_2$ wind once around the origin in the positive orientation and have $|\theta \nu_1 \nu_2 y_1 y_2| < 1.$  This same notation can be used to write the limiting covariance for $T_1.$  It can be verified that $F(z,\gamma) = 1 + zG(z,\gamma),$ so that the limiting expression, in terms of a contour integral, is
\[
T_1 \tendsto \frac{\nu_1^k \nu_2^l\left(\Ex \WEV^4 - 1\right) (\theta \nu_1\nu_2)  k\cdot l}{(2\pi i)^2} \DBLoint
\frac{1}{z_1^{k+1}z_2^{l+1}}
(1+y_1)(1+y_2)dz_1\,dz_2.
\]
As both $k,l > 0,$ iterating the integration and applying the residue theorem shows this is equal to
\[
T_1 \tendsto \frac{\nu_1^k \nu_2^l\left(\Ex \WEV^4 - 1\right) (\theta \nu_1\nu_2)  k\cdot l}{(2\pi i)^2} \DBLoint
\frac{1}{z_1^{k+1}z_2^{l+1}}
y_1y_2\,dz_1\,dz_2.
\]
This is also easily argued combinatorially in terms of coefficient extraction and the relation between $F(z,\gamma)$ and $G(z,\gamma).$  

Changing the integration to be over $y_1$ and $y_2,$ the two limiting expressions become
\begin{align}
\label{wish_prim_cov}
T_1 &\tendsto \left(\Ex \WEV^4 - 1\right)\frac{\nu_1^k \nu_2^l k \cdot l}{(2\pi i)^2} 
\DBLoint
\frac{ \theta \nu_1\nu_2 y_1 y_2}{z_1^{k-1}z_2^{l-1}}
\left( 
1-\frac{\gamma_1}{y_1^2}
\right)
\left( 
1-\frac{\gamma_2}{y_2^2}
\right)
dy_1\,dy_2. \\
T_2 &\tendsto -\frac{2\nu_1^k \nu_2^l k \cdot l}{(2\pi i)^2} 
\DBLoint
\frac{\phi\left( \theta \nu_1\nu_2  y_1 y_2 \right) }{z_1^{k-1}z_2^{l-1}}
\left( 
1-\frac{\gamma_1}{y_1^2}
\right)
\left( 
1-\frac{\gamma_2}{y_2^2}
\right)
dy_1\,dy_2.
\end{align}
After summing these two terms, we split the resulting covariance into two expressions $\WCM_{k,l}$ and $\WEM_{k,l}$, the first representing the contribution of the free field and the other contributing an error term that vanishes when the matrix entries agree with the Gaussian's $4^{th}$ moment.
\begin{align}
\label{wish_cov_defs}
\WCM_{k,l} &\Def -\frac{2 \nu_1^k \nu_2^l k \cdot l}{(2\pi i)^2} 
\DBLoint
\frac{\log\left(1- \theta \nu_1\nu_2 y_1 y_2\right)}{z_1^{k-1}z_2^{l-1}}
\left( 
1-\frac{\gamma_1}{y_1^2}
\right)
\left( 
1-\frac{\gamma_2}{y_2^2}
\right)
dy_1\,dy_2, \\
\WEM_{k,l} &\Def
\left(\Ex \WEV^4 - 3\right)
\frac{\nu_1^k \nu_2^l k \cdot l}{(2\pi i)^2} 
\DBLoint
\frac{{\theta}{\nu_1\nu_2} y_1 y_2}{z_1^{k-1}z_2^{l-1}}
\left( 
1-\frac{\gamma_1}{y_1^2}
\right)
\left( 
1-\frac{\gamma_2}{y_2^2}
\right)
dy_1\,dy_2.
\end{align}
We emphasize that the contours are any that wind once positively around $y_1=0$ and $y_2 =0,$ and keep 
\begin{eqnarray} \label{cond_gamma}
\left| \theta \nu_1 \nu_2 y_1 y_2 \right| \leq 1~.
\end{eqnarray}
Note in the proof below that equality in the above is allowable.

\begin{lemma}
\label{lem:wish_cov_GF}
The two expressions above are equivalent to
\begin{equation*}
  %\label{final_expr}
\WCM_{k,l}
 =  -\frac{4 k\cdot l}{\pi^2} 
\!\!
\DBLoint
\!\!
\left(\mu_1 \!+\! \nu_1 \!+\! 2\Re \zeta_1\right)^{k-1}
\!
\left(\mu_2 \!+\! \nu_2 \!+\! 2\Re \zeta_2\right)^{l-1}
\!
K(\zeta_1,\zeta_2)
\frac{\Im \zeta_1}{\zeta_1}
\frac{\Im \zeta_2}{\zeta_2}
d\zeta_1\,d\zeta_2,
\end{equation*}
respectively, 
\begin{equation*}
	\WEM_{k,l} = - \frac{
	4\theta \left(\Ex \WEV^4 - 3\right)
	k \cdot l
	}{
		\pi^2 
	}
\DBLoint
\!\!
\left(\mu_1 \!+\! \nu_1 \!+\! 2\Re \zeta_1\right)^{k-1}
\!
\left(\mu_2 \!+\! \nu_2 \!+\! 2\Re \zeta_2\right)^{l-1}
\!
\frac{\left(\Im \zeta_1\right)^2}{\zeta_1}
\frac{\left(\Im \zeta_2\right)^2}{\zeta_2}
d\zeta_1\,d\zeta_2.
\end{equation*}
Here
\(
K(\zeta_1,\zeta_2)
=
\log \left|
\frac{
{\theta^{-1} - \zeta_1 \zeta_2}
}
{
{\theta^{-1}- \zeta_1 \bar{\zeta_2}}
}
\right|
\),
%$\theta' = \tfrac{\mu_1\nu_1\mu_2\nu_2}{\theta}$ 
and the contours are semicircles centered at $0$ in the upper half plane of radii $\sqrt{\mu_i \nu_i}$ for $i=1,2.$
\end{lemma}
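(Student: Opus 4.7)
The plan is to treat $\WCM_{k,l}$ and $\WEM_{k,l}$ in parallel, since both expressions in \eqref{wish_cov_defs} are closed-contour integrals of the same general shape: a kernel (either $\log(1-\theta\nu_1\nu_2 y_1 y_2)$ or the monomial $\theta\nu_1\nu_2 y_1 y_2$) against $(1-\gamma_1/y_1^2)(1-\gamma_2/y_2^2)/(z_1^{k-1}z_2^{l-1})$. The proof has two conceptually distinct stages: a pointwise substitution that recasts the integrand entirely in terms of $\zeta_i$'s on the circles of radius $\sqrt{\mu_i\nu_i}$, and a symmetry argument that folds those full circles into the upper semicircles in $\H$.

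For the substitution, take $y_i = \zeta_i/\nu_i$, so that $|y_i|=\sqrt{\gamma_i}$ maps to $|\zeta_i|=\sqrt{\mu_i\nu_i}$ and $dy_i = d\zeta_i/\nu_i$. The relation $z_i^{-1} = \gamma_i/y_i + (1+\gamma_i) + y_i$ (implicit in $y_i = z_i G(z_i,\gamma_i)$) gives, on the contour,
\[
z_i^{-1} = \tfrac{1}{\nu_i}\bigl(\mu_i+\nu_i+2\Re\zeta_i\bigr),
\]
producing the target polynomial factors. Because $|\zeta_i|^2 = \mu_i\nu_i$ on the contour, $\gamma_i/y_i^2 = \mu_i\nu_i/\zeta_i^2 = \bar\zeta_i/\zeta_i$, hence
\[
1-\frac{\gamma_i}{y_i^2} = \frac{2i\,\Im\zeta_i}{\zeta_i},
\]
and $\theta\nu_1\nu_2 y_1 y_2 = \theta\zeta_1\zeta_2$. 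Tallying the $\nu_i$-powers shows they cancel exactly, so both $\WCM_{k,l}$ and $\WEM_{k,l}$ become closed-contour integrals over the full circles $|\zeta_i|=\sqrt{\mu_i\nu_i}$, with the weights $\Im\zeta_i/\zeta_i$ already in place and the remaining $\zeta_i$-dependence sitting in either $\log(1-\theta\zeta_1\zeta_2)$ or the monomial $\theta\zeta_1\zeta_2$.

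For the symmetry fold, parametrize $\zeta_i = \sqrt{\mu_i\nu_i}e^{i\phi_i}$ with $\phi_i\in[0,2\pi]$ and split each $\phi_i$-integration at $\pi$; on the lower arc, substitute $\phi_i\mapsto 2\pi-\phi_i$. Under this reflection $\zeta_i$ becomes $\bar\zeta_i$ on the upper semicircle and $\sin\phi_i$ flips sign, but the combination $(\Im\zeta_i/\zeta_i)\,d\zeta_i = i\,\Im\zeta_i\,d\phi_i$ is invariant, as are the polynomial factors in $\Re\zeta_i$. Summing the four resulting contributions over $[0,\pi]^2$, the four logarithms in $\WCM$ group into conjugate pairs,
\[
\log(1-\theta\zeta_1\zeta_2)+\log(1-\theta\bar\zeta_1\bar\zeta_2)=2\log|1-\theta\zeta_1\zeta_2|,
\]
and similarly for the mixed pair, so the alternating combination collapses to
\[
2\log\left|\frac{1-\theta\zeta_1\zeta_2}{1-\theta\zeta_1\bar\zeta_2}\right|=2K(\zeta_1,\zeta_2),
\]
where the $\theta^{-1}$ appearing in the stated kernel simply arises from multiplying numerator and denominator of the ratio inside the absolute value by $\theta^{-1}$. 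For $\WEM$, the same four-term grouping applied to the monomial yields $\zeta_1\zeta_2-\zeta_1\bar\zeta_2-\bar\zeta_1\zeta_2+\bar\zeta_1\bar\zeta_2 = (\zeta_1-\bar\zeta_1)(\zeta_2-\bar\zeta_2) = -4\,\Im\zeta_1\,\Im\zeta_2$, which combined with the $\Im\zeta_i/\zeta_i$ weights produces the $(\Im\zeta_i)^2/\zeta_i$ slots in the stated form.

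The main obstacle is bookkeeping: the $2i$ contributed by each $1-\gamma_i/y_i^2$, the prefactor $(2\pi i)^{-2}$, the factor produced by the four-term symmetry argument, and the sign conventions for $d\zeta_i$ all need to combine correctly for the stated constants to emerge. All the analytic content, however, is captured by the two on-contour identities $z_i^{-1} = (\mu_i+\nu_i+2\Re\zeta_i)/\nu_i$ and $1-\gamma_i/y_i^2 = 2i\,\Im\zeta_i/\zeta_i$; everything else is arithmetic.
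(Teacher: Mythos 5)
Your proposal follows the same route as the paper's proof—the substitution $y_i=\zeta_i/\nu_i$, the two on-contour identities $z_i^{-1}=(\mu_i+\nu_i+2\Re\zeta_i)/\nu_i$ and $1-\gamma_i/y_i^2=2i\,\Im\zeta_i/\zeta_i$, and the conjugation fold to the upper semicircles—merely in the opposite order (you substitute first and fold second; the paper folds on $|y_i|=\sqrt{\gamma_i}$ and rescales at the end). Two small points deserve care. First, the phrase ``the combination $(\Im\zeta_i/\zeta_i)\,d\zeta_i$ is invariant'' is true of the one-form, but $\zeta_i\mapsto\bar\zeta_i$ is orientation-reversing, so $\int_{C^-}f\omega=-\int_{C^+}f(\bar\zeta)\omega$; that minus sign is exactly what produces the alternating sum $\log(1-\theta\zeta_1\zeta_2)-\log(1-\theta\bar\zeta_1\zeta_2)-\log(1-\theta\zeta_1\bar\zeta_2)+\log(1-\theta\bar\zeta_1\bar\zeta_2)$ and hence the ratio inside $K$—as written, your invariance claim taken at face value would give a \emph{sum} of moduli rather than the ratio, so this step should be stated explicitly (the paper says ``$y\mapsto\bar y$ reverses direction and neutralizes the minus signs''). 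Second, the stated contours with $|\zeta_i|=\sqrt{\mu_i\nu_i}$ are admitted under the constraint \eqref{cond_gamma} which permits equality $\theta\sqrt{\mu_1\nu_1\mu_2\nu_2}=1$; at that boundary the contour touches the logarithmic singularity, and the paper bridges this by taking circles of slightly smaller radii and invoking dominated convergence, a step your argument should also include to cover the full asserted range.
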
 
\begin{proof}
We start by noting that, that if $\xi = R e^{i \theta}$ for real $R$ and $\theta$ then, $1 - \frac{R^2}{\xi^2} = 2i \frac{\Im(\xi)}{\xi}$. We will now choose the integration curves in \eqref{wish_cov_defs} to be the circles $|y_1| = \sqrt{\gamma_1}$, respectively, $|y_2| = \sqrt{\gamma_2}$. Note that for this choice of curves to be allowable it suffices that
\[
\theta \nu_1 \nu_2 \sqrt{\gamma_1} \sqrt{\gamma_2} = \theta \sqrt{\mu_1 \nu_1 \nu_2 \mu_2}<1\,.
\]
In the case that $\theta \sqrt{\mu_1 \nu_1 \nu_2 \mu_2} =1,$ we may take circles of slightly smaller radius and take the limits as the contours' radii go to $\sqrt{\gamma_i}$ respectively.  By the integrability of the logarithmic singularity, a dominated convergence argument shows that the formula holds in this case as well. So as long as condition \eqref{cond_gamma} is satisfied, the results of this lemma will be valid.  
%This is equivalent to having $\theta^2 \mu_1 \nu_1 \mu_2 \nu_2 <1$; if we look at the definition of $\theta$, we can see that this will be fulfilled for every choice of limiting sizes \emph{except} when $\theta = \mu_1 = \nu_1 = \mu_2 = \nu_2 = 1$, which we have explicitly forbidden\footnote{Must forbid this.}. Thus the choice of curves is acceptable.

With this choice of curves, \eqref{wish_cov_defs} becomes 
\[
\WCM_{k,l} = \frac{8\nu_1^k \nu_2^lk \cdot l}{(2\pi i)^2} 
\DBLoint_{\substack{ |y_i| = \sqrt{\gamma_i} \\ i=1,2}}
\frac{\log\left(1- \theta \nu_1\nu_2 y_1 y_2\right)}{z_1^{k-1}z_2^{l-1}}
\frac{\Im(y_1)}{y_1}\frac{\Im(y_2)}{y_2} dy_1\,dy_2~;
\]
note now that $\frac{\nu_i}{z_i} = \mu_i + \nu_i + 2 \nu_i \Re y_i$ for $i = 1,2$. 
Similarly, one obtains that
\[
\WEM_{k,l} =  -\left(\Ex \WEV^4 - 3\right)
\frac{4 \nu_1^k \nu_2^l k \cdot l}{(2\pi i)^2} 
\DBLoint_{\substack{ |y_i| = \sqrt{\gamma_i} \\ i=1,2}}
\frac{{\theta}{\nu_1\nu_2} y_1 y_2}{z_1^{k-1}z_2^{l-1}}
\frac{\Im(y_1)}{y_1}\frac{\Im(y_2)}{y_2} dy_1\,dy_2~.
\]

Focusing on $\WCM$, we split the integral into four parts, corresponding to the choice of upper or lower semicircle for each one of $y_1$ and $y_2$.  We can transform each of these integrals to an integral over the cartesian product of upper semicircles. This transformation does not in any way affect $z_1$ and $z_2$, as only the real parts are involved in the $z_i$s.
Crucially, on the circle $|y|=R$, the transformation $y \mapsto \bar{y}$, which takes upper semicircle into the lower one and vice-versa, has the property that 
\[
  \frac{\Im y}{y} dy  = \frac{\Im ({\overline{y})}}{\bar{y}} d\bar{y},
\]
as $y \bar{y} = R^2$ implies $dy/y = - d\bar{y}/\bar{y}$, and $\Im y = -\Im \bar{y}$. 

As in \cite{Borodin}, note that 
%{\footnotesize \[
%\log \left (1 - \theta \nu_1 \nu_2 y_1 y_2 \right ) - \log \left ( 1 -  \theta \nu_1 \nu_2 y_1 \bar{y_2} \right) - \log \left ( 1 - \theta \nu_1 \nu_2 \bar{y_1} y_2 \right) + \log \left ( 1 - \theta \nu_1 \nu_2 \bar{y_1} \bar{y_2}\right)  = 2 \log \left | \frac{1 - \theta \nu_1 \nu_2 y_1 y_2}{1 - \theta \nu_1 \nu_2 y_1 \bar{y_2}} \right |~.
%\]}
\begin{align*}
2 \log \left | \frac{1 - \theta \nu_1 \nu_2 y_1 y_2}{1 - \theta \nu_1 \nu_2 y_1 \bar{y_2}} \right | = 
&\log \left (1 - \theta \nu_1 \nu_2 y_1 y_2 \right ) 
- \log \left ( 1 -  \theta \nu_1 \nu_2 y_1 \bar{y_2} \right) \\
- &\log \left ( 1 - \theta \nu_1 \nu_2 \bar{y_1} y_2 \right) 
+ \log \left ( 1 - \theta \nu_1 \nu_2 \bar{y_1} \bar{y_2}\right). 
\end{align*}
\normalsize Finally, because the transformation $y \mapsto \bar{y}$ reverses direction and introduces a minus sign in integration, we can conclude that the double integral over the full circles can be rewritten, in the case of $\WCM_{k,l}$, as 
\[
\WCM_{k,l} = -\frac{4\nu_1^k \nu_2^l k \cdot l}{\pi^2} 
\DBLoint_{\substack{
|y_i| = \sqrt{\gamma_i} \\
 \Im y_i >0  \\
 i=1,2
}}
\frac{1}{z_1^{k-1}z_2^{l-1}}    \log \left | \frac{1 - \theta \nu_1 \nu_2\cdot y_1 y_2}{1 - \theta \nu_1 \nu_2 \cdot y_1 \bar{y_2}} \right |
\frac{\Im(y_1)}{y_1}\frac{\Im(y_2)}{y_2} dy_1\,dy_2~; 
\]

In the case of $\WEM_{k,l}$, we note that 
\[
  -4\Im(y_1)\Im(y_2) = (y_1 - \bar{y_1})(y_2 - \bar{y_2}) = y_1 y_2 - \bar{y_1} y_2 -y_1 \bar{y_2} + \bar{y_1} \bar{y_2}~.
\]
Thus following the same argument
\[
\WEM_{k,l} = - \left(\Ex \WEV^4 - 3\right)
\frac{4\nu_1^k \nu_2^l k \cdot l}{\pi^2} 
\DBLoint_{\substack{
|y_i| = \sqrt{\gamma_i} \\
 \Im y_i >0  \\
 i=1,2
}}\frac{{\theta}{\nu_1\nu_2}}{z_1^{k-1}z_2^{l-1}} \frac{\left ( \Im(y_1)\right)^2}{y_1}\frac{\left(\Im(y_2)\right)^2}{y_2} dy_1\,dy_2~.
\]
When the $4$-th moment condition in Assumption~\ref{a:4e} is fulfilled, $\WEM_{k,l}$ disappears. Its presence when this condition is not fulfilled introduces a correction to the Gaussian free field.

After one final change of variables $y_i \mapsto \nu_i^{-1} \zeta_i$, dividing the top and bottom of the fraction under the logarithm by $\theta$ (in the expression for $\WCM_{k,l}$), the lemma (under the $4$-th moment condition in Assumption \ref{a:4e}) is proved.
\end{proof}

\section{Polynomial CLTs} \label{sec:pclt}
In this section, we will show how the central limit theorems for polynomial test functions (Propositions~\ref{prop:CLS} and \ref{prop:CLSplanar}) are derived.  The principal difficulty for both is the covariance calculation, made in the previous section.  The remainder of the work is to show that the limiting mixed moments agree with that of a Gaussian.  In the case that $\left\{ \atomvar_{i,j} \right\}$ have uniformly bounded moments of all orders, this proof is essentially a minor modification to the corresponding proof for a single Wigner matrix (see (2.1.46) of~\cite{AnGuZe} and the development thereafter).

\begin{proof}[Proof of Proposition~\ref{prop:CLS}]
As in Section~\ref{sec:covar}, we recall that a centered trace of a power, 
\[
\cmti{p_j}{j} = 
\tr \left(\submatrix_j^* \submatrix_j\right)^{p_j}
-\Exp \tr \left(\submatrix_j^* \submatrix_j\right)^{p_j}~,
\]
can be written as a sum over closed walks of length $2p_j$ on the complete bipartite graph $K(\mathbb{N},\mathbb{N}).$  Let $\walkset{j}$ be those closed walks so that
\[
\cmti{p_j}{j} = \sum_{w \in \walkset{j}} \atomword{w},
\]
where we formally represent a walk as $w~:~[2p_j] \to \mathbb{N}$ and recall that
\[
\atomword{w} = \prod_{i=1}^{p_j} 
\atomvar_{w(2i-1), w(2i)} 
\overline{\atomvar_{w(2i+1), w(2i)}}
-\Exp \prod_{i=1}^{p_j} 
\atomvar_{w(2i-1), w(2i)} 
\overline{\atomvar_{w(2i+1), w(2i)}}.
\]
Explicitly, $\walkset{j}$ is the set of all closed walks with $2j$ steps on the complete bipartite graph $K(M_j,N_j)$ starting on the $M_j$ side, where $M_j$ and $N_j$ are the respective row indices and column indices of $\submatrix_j$ in the infinite array $(\atomvar_{i,j}).$  Without loss of generality, we assume that all such $M_j$ and $N_j$ are contained in $[L].$  

The method of proof will be the computation of moments.  If suffices to show that for each word $m \in \left[ k \right]^l,$ the mixed moments satisfy
\[
\expect \prod_{j=1}^l \cmti{p_{m(j)} }{m(j)} = \begin{cases}
o(1) & \text{if $l$ is odd,} \\
o(1)+ \sum_{G} \prod_{\{a,b\} \in \mathcal{E}(G)} \expect \cmti{p_{m(a)}}{m(a)} \cmti{p_{m(b)}}{m(b)} & \text{if $l$ is even,}
\end{cases}
\]
where the sum is over all graphs $G$ that are perfect matchings on the vertices $[k]$ and where $\mathcal{E}(G)$ is the edge set of this graph.  By Lemma~\ref{lem:wish_cov_GF} it will then follow that this moment converges to the desired Gaussian moments.

Consider a fixed collection of walks $(w_1, w_2, \ldots, w_l) \in \prod_{i=1}^l \walkset{m(i)}.$  Define the \emph{support} $G$ of this collection of walks to be the subgraph of $K(L,L)$ that contains precisely those edges traversed by the walk.  Call two tuples of walks \emph{isomorphic}, if it is possible to permute $[L]$ to realize one in terms of the other.

Let
\(
p \Def \sum_{i=1}^{l} p_{m(i)},
\)
so that there are exactly $2p$ directed edges (with multiplicity) used by $(w_1,w_2,\ldots, w_l).$
If somewhere in this collection of walks there is an edge which is traversed only once (in total, by all the walks), then it follows that 
\[
\Exp \prod_{i=1}^l \atomword{w_i} = 0,
\]
and hence it suffices to consider only those tuples that cover every edge at least twice. Thus, there are at most $p$ distinct undirected edges used by these walks.  

%Let $G=G(w_1,w_2,\ldots, w_l)$ be the subgraph of $K(L,L)$ whose edges are those that are used by some $w_i.$  
For any edge $e \in \mathcal{E}(G),$ let the multiplicity of $e,$ $|e|,$ be the total number of times $e$ is traversed by the walks $w_i.$  Define the excess multiplicity $R=R(w_1,w_2, \ldots, w_l)$ to be
\[
	R = \sum_{e \in \mathcal{E}(G)} \left( |e| - 4 \right)_{+}.
\]
Using \eqref{eq:higher}, it is not hard to see that
\[
\Exp \prod_{i=1}^l \atomword{w_i} = O(L^{-p+(\tfrac 12 - \delta)R}),
\]
uniformly over all choices of words from $\prod_{i=1}^l \walkset{m(i)}.$

%Let $c$ be the number of connected components of $G.$
%Note that the moments up to any fixed order of $L^{p_j}\cdot \atomword{w_j}$ can be bounded uniformly over all words, all $1 \leq j \leq k,$ and all $L \geq 1.$ Hence, we have the estimate
%\[
%\Exp \prod_{i=1}^l \atomword{w_i} = O(L^{-p}),
%\]
%which holds uniformly over all choices of words from $\prod_{i=1}^l \walkset{m(i)}.$  Now, if any tuple covers an edge more than twice, then it follows there are $O(L^{p-1})$ vertex choices that yield isomorphic tuples, and hence the contribution is negligible.

%Thus, we may consider the case that every edge of the support is used exactly twice by the whole collection of walks.  
%% The following is not true and not needed.  Think of a long cycle pinched together in the center.  Its support has two cycles --EP
%In this case, the support of any one walk is either unicyclic or a DFS walk on a tree.  
Recall that the dependency graph $D$ with vertices $[l]$ is given by the edge set 
\[
\mathcal{E}(D) = \{(i,j)~:~\atomword{w_i} \text{ is not independent of } \atomword{w_j} \}.
\]
If in this dependency graph there is an isolated vertex $i$, then the corresponding variable $\atomword{w_i}$ is independent of all the other $\atomword{w_j},$ and hence 
\(
\Exp \prod_{i=1}^l \atomword{w_i} = 0.
\)  
%Thus, we assume there are at least $\lceil{l/2}\rceil$ many distinct edges in the dependency graph.  
Thus, we assume every vertex in the dependency graph has an incident edge.

Let $c$ be the number of connected components of the dependency graph. Walks whose indices are neighbors in the dependency graph necessarily overlap, and hence the support of a connected component of the dependency graph is a connected subgraph of $G.$  Call these subgraphs of $G$ the $D$-components of $G.$  Note that two $D$-components can in principle overlap on vertices, but they can not overlap on edges.  Let $t$ be the number of $D$-components that are trees. %that is traversed by multiple walks.  
This implies that the number of vertices in the support of $G$ is at most $|\mathcal{E}(G)| + t.$

Since there are no isolated vertices in the dependency graph $D$, each acyclic $D$-component of $G$ must contain an edge of multiplicity $4.$
Since every edge in $G$ has multiplicity at least $2,$ we have
\[
	R 
	= \sum_{e \in \mathcal{E}(G)} \left( |e| - 4 \right)_{+}
	= \sum_{e \in \mathcal{E}(G)} \biggl(  |e| - 2 - 2\cdot \one[|e|\geq 4] - \one[|e|=3] \biggr)
	\leq 2p - 2|\mathcal{E}(G)| - 2t.
	%- \one[R > 0],
\]

It follows that the total contribution to the mixed moment from tuples $(w_1,w_2,\dots, w_l)$ whose support is given by $G$ is
\begin{equation}
  \label{eq:mombound}
\begin{aligned}
	\sum_{
		\substack{
			(w_1,\dots,w_l) \\
			\text{support $G$}
		}
	}
\Exp \prod_{i=1}^l \atomword{w_i}
 &= O\left( 
 L^{|\mathcal{E}(G)| +t} L^{-p + (\tfrac{1}{2} -\delta)R}
 \right) \\
 &= O\left( 
 L^{|\mathcal{E}(G)| +t} L^{-p + (\tfrac{1}{2} -\delta)
 (2p - 2|\mathcal{E}(G)| - 2t)}
 \right) \\
 &= O\left(L^{ -2\delta(p-|\mathcal{E}(G)| - t)}\right).
\end{aligned}
\end{equation}
Let us show a lower bound for $p$ in terms of $|\mathcal{E}(G)|$ and $t.$
\[
  2p = \sum_{e \in \mathcal{E}(G)} |e| \geq 2|\mathcal{E}(G)| + 2t,
\]
where we have used that every component that is a tree contains an edge of multiplicity $4.$
If the inequality is strict, that is $p>|\mathcal{E}(G)| + t,$ the contribution to the mixed moment in \eqref{eq:mombound} is trivial.

When $p= |\mathcal{E}(G)| + t,$ every $D$-component of $G$ that contains a cycle must have all multiplicities equal to $2$ and every acyclic $D$-component of $G$ has exactly one edge of multiplicity $4.$  The only way to create such an acyclic $D$-component is for two depth first search walks to overlap on a single edge. Thus every such acyclic $D$-component has size $2.$ 

A $D$-component of $G$ whose every edge has multiplicity equal to $2$ can be composed of possibly multiple walks all of whose multiplicity-$1$ edges are matched.  However, if we let $\mathcal{C} \subset [l]$ be the indices of such a $D$-component, then
\(
\Exp \prod_{i \in \mathcal{C}} \atomword{w_i} = O(L^{-\sum_{i \in \mathcal{C}} p_i}).
\)
Note that $\sum_{i \in \mathcal{C}} p_i$ is the number of edges in the $D$-component of $G$ using these indices.  The number of vertices of this $D$-component is at most the number of edges, with equality if and only if the graph is unicyclic.  In particular, if the $D$-component is not unicyclic, the contribution of the tuples $(w_1, \dots, w_l)$ to the mixed moment will be $O(L^{-1}).$
A $D$-component is unicyclic only if $|\mathcal{C}|=2.$  

From the previous two paragraphs, all connected components of $D$ must have size $2$ to have a non-negligible contribution to the mixed moment, i.e.\,$D$ must be a perfect matching, from which follows the Wick formula.

It is instructive to note that the only difference between the situation here and the statement for a single Wishart matrix is that the number of vertex choices depends on the sizes of the submatrices chosen.  However, as all dimensions scale linearly with $L,$ the needed $O(L^{|\mathcal{E}(G)|+t})$ estimate still holds.

\end{proof}

We proceed to the proof for planar test functions, which again uses the method of moments, first establishing the covariance convergence and then proving the needed form of the moments.  The covariance calculation can be seen to follow from Lemma~\ref{lem:wish_cov_GF} together with the following elementary lemma.
\begin{lemma}
\label{lem:cov_bound}
There is a constant $C_{k,l}$ so that for all $0 \leq y,z \leq M,$ 
\[
\left|\Exp \CLS{x^k}{y}\CLS{x^l}{z}\right| \leq C_{k,l}.
\]
\end{lemma}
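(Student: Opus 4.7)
The plan is to reduce the covariance bound to a uniform variance bound via Cauchy--Schwarz, and then control the variance directly using the walk expansion developed in Section~\ref{sec:covar}. Since
\[
\left|\Exp \CLS{x^k}{y}\CLS{x^l}{z}\right| \leq \sqrt{\Var \CLS{x^k}{y}}\sqrt{\Var \CLS{x^l}{z}},
\]
it is enough to show that $\Var \CLS{x^k}{y} \leq C_k$ for some constant $C_k$ independent of $y \in [0,M]$ and $L \geq 1$.

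For the variance bound I would expand $\CLS{x^k}{y} = L^{-k}\sum_{w \in \walkset{k}} \atomword{w}$ as a sum over closed walks of length $2k$ on the complete bipartite graph whose vertex classes have sizes bounded by $M\mu L$ and $M\nu L$, exactly as in Section~\ref{sec:covar}. The variance then becomes a double sum over pairs $(w_1,w_2)$. Pairs for which some undirected edge is traversed only once contribute zero after centering, and pairs whose combined support is disconnected contribute zero by independence of the two centered atom words. The remaining pairs have a connected combined support in which every undirected edge is covered at least twice, so the support has at most $2k$ vertices. The number of isomorphism classes of such pair shapes is bounded by a combinatorial constant depending only on $k$; the number of vertex labellings of a given shape using the available index sets is at most $(2M\max(\mu,\nu)L)^{2k}$; and each $|\Exp \atomword{w_1}\atomword{w_2}|$ is bounded by a constant depending only on $k$ via H\"older's inequality and the uniform moment bounds in~\eqref{conds}. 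Combined with the $L^{-2k}$ normalization, this gives $\Var \CLS{x^k}{y} \leq C_k$ uniformly in $y$ and $L$.

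The only subtle point, and the item I expect to require the most care, is uniformity of these estimates in $y$: when $y$ is close to zero the relevant index sets may be smaller than $L$, but the same walk-counting argument still applies (the number of admissible labellings can only decrease, never increase). Otherwise the argument is a non-asymptotic repackaging of the computation already carried out in Section~\ref{sec:covar}, and no new ingredients are needed.
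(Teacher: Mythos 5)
Your proposal is correct and rests on the same walk-counting machinery that the paper uses; the paper itself skips the proof, remarking only that the convergence of the covariance computed in Section~\ref{sec:covar} is uniform over $[0,M]^2$ and hence the limit (and thus the prelimit for large $L$) is bounded. Your Cauchy--Schwarz reduction is a clean convenience: it reduces the two-matrix covariance bound to a one-matrix variance bound, so the walk enumeration only needs to be done for a single Wishart matrix. The qualitative conclusion is the same.

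One step deserves more care than you gave it. You eliminate pairs with a single-covered edge, and pairs whose \emph{combined support} is disconnected, and then assert that the survivors have at most $2k$ vertices. That inference is not quite right as stated: connectedness of the support together with every edge covered at least twice only gives the bound $2k+1$ vertices (the support could \emph{a priori} be a tree on $2k$ edges). With $2k+1$ vertices the labelling count is $O(L^{2k+1})$, which after the $L^{-2k}$ normalization gives $O(L)$, not $O(1)$, so the $2k+1$ case must actually be excluded, not merely counted. What rules it out is the stronger observation that the two walks must share an \emph{edge} (not merely a vertex): if the edge supports are disjoint, the two centered atom words are independent and the contribution vanishes, even when the vertex supports overlap so that the combined support is connected. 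This is exactly the ``$k+l+1$ vertices'' case the paper dismisses explicitly in Section~\ref{sec:covar}. Once edge-sharing is enforced, the tree-on-$2k$-edges case is impossible (on a tree each closed walk covers each of its edges an even number of times, so exactly-twice coverage would force disjoint edge supports), and one is left with either a repeated edge or a common cycle, both of which give at most $2k$ vertices. With that correction the estimate $\Var \CLS{x^k}{y} \leq C_k$ holds uniformly in $y \in [0,M]$ and $L \geq 1$, and your argument goes through.
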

The convergence in the covariance limit can in fact be made uniform over $0 \leq y,z \leq M,$ and hence this lemma is an immediate consequence.  Thus, we skip its proof and turn to the extension to planar functions.

\begin{proof}[Proof of Proposition~\ref{prop:CLSplanar}]
From Lemma~\ref{lem:cov_bound}, it follows that for any polynomials $p_i$ and $p_j$ there is a constant $C$ so that 
\[
\left|\Exp \CLS{p_i}{y}\CLS{p_j}{z}\right| \leq C
\]
for all $1 \leq i,j \leq k$ and all $0\leq y,z \leq M,$ where $M$ is sufficiently large to contain the supports of all probability measures $\rho_j.$  Now we notice that
\[
\Exp \PCLS{p_i}{\rho_i}\PCLS{p_j}{\rho_j}
= 
\int_{0}^\infty
\int_{0}^\infty
\Exp
\CLS{p_i}{y}
\CLS{p_j}{z}~d\rho_i(y)d\rho_j(z),
\] 
on account of $\PCLS{p_i}{\rho_i}$ and $\PCLS{p_j}{\rho_j}$ being finite linear combinations of linear statistics.  Note that pointwise convergence of the integrand to the desired quantity follows from Lemma~\ref{lem:wish_cov_GF}.  The convergence of the integral to the desired quantity now follows by dominated convergence.

It remains to show that the variables are Gaussian, which follows in a nearly identical fashion to Proposition~\ref{prop:CLS}.  To see this, define
\(
\walksetf{j}{y}
\)
to be all closed walks on $K([\mu y L],[\nu y L])$ starting from the $[\mu y L]$ side and having length $2j.$  Now we can write
\[
\PCLS{x^{p_i}}{\rho_i} 
= \int_0^\infty
\sum_{w \in \walksetf{i}{y}} \atomword{w}~d\rho_i(y),
\]
which is again a finite sum of linear statistics.  In particular, we can write
\[
\PCLS{x^{p_i}}{\rho_i} 
=
\sum_{w \in \walksetf{i}{M}} \rhoweight{\rho_i}{w}\atomword{w},
\]
where $\rhoweight{\rho_i}{y} = \rho_i([y^*, M]),$ for $y^*$ is the smallest non-negative real so that $w$ is a walk on $K([\mu y L],[\nu y L]).$  As all these coefficients are uniformly bounded by $1,$ the remainder of the proof is now identical to the proof of~\ref{prop:CLS}. 
\end{proof}

\section{Extension argument}
\label{sec:extension}
The main purpose of this section is to prove Theorem~\ref{thm:pcls}.  We commence with the proof of Lemma~\ref{lem:rho_var}.
\begin{proof}[Proof of Lemma~\ref{lem:rho_var}]
%From Proposition~\ref{prop:shcher_var}, we have that for any $s > \tfrac32$ there is a constant $C_s$ so that
%\[
%\Var \CLS{\phi}{1} \leq C_s \FSN[\phi][s]^2.
%\]
%We would like to integrate $\Var \CLS{\phi}{y}.$

	Recall that for all $0<y<1$, $\Wmf{y} = \smf{y}^* \smf{y}/L$ with $\smf{y}$ a $[y \mu L] \times [y \nu L]$ submatrix of the $[\mu L] \times [\nu L]$ matrix $G$.  
	It is possible to construct sequences $m^{i}(L)$ for $i = 1,2,\ldots,\lfloor \tfrac1 \nu \rfloor$ with $m^{i}(L)/L \to \mu/\nu$ and $m^{i}(L) \times L$ submatrices $Q_{i,L}$ of $\ambientmatrix$ so that 
	\[
		\left\{
			\smf{y} : y > 0, L \in \mathbb{N}
		\right\}
		\subseteq
		\left\{
			Q_{i,L} :i = 1,2,\ldots,\lfloor \tfrac1 \nu \rfloor, L \in \mathbb{N} 
		\right\}.
	\]
	Hence by Proposition~\ref{prop:shcher_var}, there is a constant $C_s>0$ so that  
\[
\Var \CLS{\phi}{y} \leq C_s \FSN[\phi \circ D_{[L\nu y]/L} ][s]^2,
\]
where $D_t$ is the dilation $x \mapsto tx.$  For any $t$ in a compact set $K$, it is easily checked that there is a $C= C(K) > 0$ so that
\[
\FSN[\phi \circ D_{t} ][s]^2 
\leq C
\FSN[\phi ][s]^2. 
\]

Let $K$ be the support of $\rho.$ By the prior reasoning, we have that there is a constant $\tilde{C}_s>0$ so that
\[
\Var \CLS{\phi}{y} \leq \tilde{C}_s \FSN[\phi][s]^2
\]
for all $y \in K.$  The proof now follows from Jensen's inequality:
\begin{align*}
\Var X_{\psi \rho}
&=
\Var \int_0^{\infty}\CLSf{\psi(\cdot,y)}{y}~d\rho(y) \\
&\leq
\int_0^{\infty}
\Var \CLSf{\psi(\cdot,y)}{y}~d\rho(y) \\
&\leq
\int_0^{\infty}
\tilde{C}_s \FSN[\psi][s]^2~d\rho(y) = \tilde{C}_s \FSNr[\psi]^2. 
\end{align*}

\end{proof}

\begin{proof}[Proof of Theorem~\ref{thm:pcls}]
We wish to extend the test functions in Proposition~\ref{prop:CLSplanar} to non-polynomial test functions, in particular to measures $\psi(x,y)\rho(y)$ where $\psi$ has finite $\FSNr$ norm.  As $\rho$ is compactly supported, we may find a $K>0$ so that $[0,K]$ contains its support.  Let $\opnorm$ denote the operator norm of a finite dimensional rectangular matrix. It is well known that $\smf{K}$ has operator norm $O(L^{1/2});$ by Theorem 9.13 of~\cite{BaiBook}, there is a constant $C$ so that with probability going to $1,$ $\opnorm[\smf{K}] \leq C\sqrt{L}.$  Thus in addition, the median $\opnorm[\smf{K}]$ is at most $\sqrt{L}.$

	As $\opnorm$ is $1$-Lipschitz (with respect to the Frobenius norm on the matrix), convex, and the entries of $\smf{K}$ are supported on $[-L^{1/2-\delta},L^{1/2 - \delta}]$ we have the following consequence of Talagrand's inequality (see \cite[Theorem 4.4.10]{AnGuZe}):
	\begin{equation}
		\label{eq:tal}
		\Pr\left[
		\opnorm[ \smf{K} ] > (C+t)\sqrt{L} 
		\right]
		\leq C\exp(-L^{2\delta}t^2/C)
	\end{equation}
	where we have increased $C$ if necessary and $t \geq 0$ is arbitrary.
	Hence by interlacing of singular values (see~\cite{HJtopics}), all eigenvalues of $\{\Wmf{y}\}_{y \leq K}$ are supported on $[0,(C+t)^2]$, with failure probability at most $C\exp(-L^{2\delta}t^2/C).$

	For any $M \geq C,$ let $\tau_M : \R \to \R$ be any $C^{\infty}$ compactly supported cutoff function that is $1$ on $[-M,M].$  Then $p(x)\tau_M(x) \in \FSS.$  Moreover, as we can bound $|p(x)| \leq C'(1+x)^d$ for some $C',d >0,$  we have
	\begin{eqnarray*}
		\Exp \left| \PCLS{p\tau_M}{\rho} - \PCLS{p}{\rho} \right|
 		& \leq &
		LC' \Exp \left[ (1+ \opnorm[ \Wmf{K} ]^2)^d \one[||W(K)||_{{\mbox{op}}} \geq M 
%\opnorm[\Wmf{K}] \leq M
] 
\right] \\
		& \leq & 
		LC' \int_M^\infty d(1+t)^{d-1}
		\Pr \left[
			\opnorm[ \Wmf{K} ]^2 > t 
		\right]
		\,dt \\
		& \leq & LC''\int_M^\infty (1+t)^d \exp(-L^{2\delta}t/C'')\,dt,
	\end{eqnarray*}
	for some other constant $C'' >0.$  As $L \to \infty,$ this goes to $0.$  Therefore, the conclusion of Proposition~\ref{prop:CLSplanar} extends to measures of the form $p(x)\tau_M(x) \rho(y).$  

Similarly, the CLT can be seen to hold for test functions of the form $p(x) \tau_M(x) \one[y \leq r] \rho(y)$; write $\Psi_{p,M,r}:=p(x) \tau_M(x) \one[y \leq r]\rho(y).$ The collection $\{\Psi_{p,M,r}\}_{p,M,r}$ is easily seen to have dense span in the space $\{ \psi: \H \to \R, \FSNr[\psi] < \infty \}$, as $p$ varies over all polynomials, $M$ varies over $[C,\infty)$ and $r$ varies over $(0,\infty).$  It now follows from Lemma~\ref{lem:rho_var} and Proposition 3 of~\cite{Shcherbina11} that the desired CLT holds for $\PCLSg{\psi(x,y)\rho(y)}$ for any $\psi$ for which $\FSNr[\psi] < \infty.$

\end{proof}

\section{Acknowledgements} We would like to thank Alexei Borodin and
  Mariya Shcherbina for fruitful discussions about this paper. 
  We would also like to thank Jeffrey Kuan and Zhengye Zhou for fixing some incorrect constants in Proposition 1.2 and Lemma 2.2 in an earlier draft.

\bibliographystyle{alpha}
\bibliography{WM}

\end{document}